\providecommand{\U}[1]{\protect\rule{.1in}{.1in}}
\numberwithin{equation}{section}
\newtheorem{theorem}{Theorem}[section]
\newtheorem{corollary}{Corollary}[section]
\newtheorem{lemma}{Lemma}[section]
\newtheorem{proposition}{Proposition}[section]
\newtheorem{remark}{Remark}[section]
\newtheorem{definition}{Definition}[section]
\numberwithin{equation}{section}
\newcommand{\into}{\int_\Omega}
\newcommand{\bbr}{\mathbb{R}}
\newcommand{\ve}{\varepsilon}
\newcommand{\W}{\mathcal{W}}
\newcommand{\Wj}{\mathcal{W}_{\mu_j,\xi_j}}
\newcommand{\Wh}{\mathcal{W}_{\mu_h,\xi_h}}
\newcommand{\uu}{\mathcal{U}}
\newcommand{\bd}{\begin{definition}}
\newcommand{\ed}{\end{definition}}
\newcommand{\br}{\begin{remark}}
\newcommand{\er}{\end{remark}}
\newcommand{\be}{\begin{equation}}
\newcommand{\ee}{\end{equation}}
\newcommand{\bc}{\begin{corollary}}
\newcommand{\ec}{\end{corollary}}
\begin{document}

\title[Brezis-Nirenberg problem]{On Brezis-Nirenberg problems: open questions and new results in dimension six}

\author[F. Li]{Fengliu Li}
\address{\noindent  School of Mathematics, China
University of Mining and Technology, Xuzhou, 221116, P.R. China }
\email{lifengliu@cumt.edu.cn}

\author[G. Vaira]{Giusi Vaira}
\address{\noindent  Dipartimento di Matematica, Universit\'a degli Studi di Bari Aldo Moro,Italy }
\email{giusi.vaira@uniba.it}

\author[J. Wei]{Juncheng Wei}
\address{\noindent Department of Mathematics, Chinese University of Hong Kong,
Shatin, NT, Hong Kong}
\email{wei@math.cuhk.edu.hk}

\author[Y. Wu]{Yuanze Wu}
\address{\noindent  School of Mathematics, Yunnan Key Laboratory of Modern Analytical Mathematics and Applications, Yunnan Normal University, Kunming, 650500, P.R. China}
\email{yuanze.wu@ynnu.edu.cn}

\thanks{G. Vaira is partially supported by
the MUR-PRIN-P2022YFAJH ``Linear and Nonlinear PDE's: New directions and Applications" and by the INdAM-GNAMPA project ``Fenomeni non lineari: problemi locali e non locali e loro applicazioni",  CUP E5324001950001. J. Wei is partially supported by GRF of Hong Kong entitled "New frontiers in singularity formations in nonlinear partial differential equations".   Y. Wu is supported by NSFC (No. 12171470). F. Li is partially supported by NSFC (No. 12171470) and the Graduate Innovation Program of China University of Mining and Technology 2025WLKXJ133}

\date{}

\maketitle
{\centering  \emph{To the memory of Louis Nirenberg with admiration}\par}
\begin{abstract}
In this paper, we consider the Brezis-Nirenberg problem
\begin{equation*}
\left\{\begin{aligned}
&-\Delta u = \lambda u+|u|^{2^*-2}u, \quad &\mbox{in}\,\Omega,\\
&u=0,\quad &\mbox{on}\, \partial\Omega,
\end{aligned}\right.
\end{equation*}
where $\Omega $ is a smoothly bounded domain of $\mathbb R^N$ with $N\geq 3$, $\lambda>0$ is a parameter and $2^*=\frac{2N}{N-2}$ is the critical Sobolev exponent.  We first recall the history of the Brezis-Nirenberg problem and then provide new results of it in dimension six.  Finally, we also list some open questions on the Brezis-Nirenberg problem.

\vspace{3mm} \noindent{\bf Keywords:} Brezis-Nirenberg problem; Multi-bump solution; Bubbling phenomenon; Sign-changing solution.

\vspace{3mm}\noindent {\bf AMS} Subject Classification 2020: 35B33; 35B40; 35B44; 35J15.%
\end{abstract}

\section{Introduction}

In this paper we consider the Brezis-Nirenberg problem, namely we let
\begin{equation}\label{pb01}
\left\{\begin{aligned}
&-\Delta u = \lambda u+|u|^{2^*-2}u, \quad &\mbox{in}\,\Omega,\\
&u=0,\quad &\mbox{on}\, \partial\Omega,
\end{aligned}\right.
\end{equation}
where $\Omega $ is a smoothly bounded domain of $\mathbb R^N$ with $N\geq 3$, $\lambda>0$ is a parameter and $2^*=\frac{2N}{N-2}$ is the critical Sobolev exponent.

\vskip0.12in

\subsection{History of \eqref{pb01}}
Problem~\eqref{pb01} is introduced by Brezis and Nirenberg in their celebrated paper \cite{BN}, where they gave a detailed study on the existence of a positive solution of it.   By applying the Pohozaev identity \cite{Po}, Brezis and Nirenberg first pointed out in \cite{BN} that problem \eqref{pb01} only has the trivial solution when $\Omega$ is star-shaped and $\lambda\leq 0$, and by multiplying \eqref{pb01} with the unique eigenfunction of $\lambda_1(\Omega)$, where $\lambda_1(\Omega)$ is the first eigenvalue of $-\Delta$ with Dirichlet boundary condition, Brezis and Nirenberg next pointed out in \cite{BN} that problem \eqref{pb01} has no positive solutions for $\lambda\geq\lambda_1(\Omega)$.  Moreover, they also showed in \cite{BN} that for $N\geq 4$ the problem~\eqref{pb01} admits a positive solution for every $\lambda\in (0, \lambda_1(\Omega))$.  The situation in dimension $N=3$ is delicate and in \cite{BN} Brezis and Nirenberg are able to prove the existence of $\lambda_*(\Omega)>0$ such that a positive solution of \eqref{pb01} exists for every $\lambda\in (\lambda^*(\Omega), \lambda_1(\Omega))$.  The existence of the positive value $\lambda_*(\Omega)$ in the existence theory of positive solutions of \eqref{pb01} is necessary in general bounded domains in dimension three, since they also showed that if $\Omega\subset \mathbb R^3$ is strictly star-shaped about the origin, then there are no positive solutions of \eqref{pb01} for $\lambda$ close to zero.  When $\Omega$ is the unit ball $B_1$ in dimension three, the existence theory of positive solutions of \eqref{pb01} and the value of $\lambda_*(B_1)$ are completely known, i.e. $\lambda_*(B_1)=\frac{\lambda_1(B_1)}{4}$ and a positive solution of \eqref{pb01} exists if and only if $\lambda\in \left(\frac{\lambda_1(B_1)}{4}, \lambda_1(B_1)\right)$.  Conversely, Bahri and Coron \cite{BC} presented an existence result for a positive solution to problem \eqref{pb01} for $\Omega$ with a nontrivial topology and $\lambda=0$.

\vskip0.12in

In the last decades problem \eqref{pb01} has attracted the attention of many researchers due to the critical behavior and the lack of compactness of the Brezis-Nirenberg problem~\eqref{pb01} that prevents to use standard variational methods.  A first question is about the asymptotic behavior of the solution founded by Brezis and Nirenberg. This was studied by Han \cite{H} and Rey \cite{R1} and they showed, with different proofs, that such solutions blow up at exactly one point. They also determined the exact blow-up rate as well as the location of the limit concentration point which is a critical point of the Robin function (see also \cite{W} in which it was shown that this concentration point is a global minimum of the Robin function). Bahri, Li and Rey \cite{BLO} determined the asymptotic behavior and locations of multi-bubbling solutions to Brezis-Nirenberg problem.  Most recently, a fine multi-bubbles analysis of positive solutions has been object of two papers by K\"onig and Laurin. Indeed, in \cite{KL2} the authors give an accurate description of the blow-up profile of a solution with multiple blow-up points when $N\geq 4$ while in \cite{KL1} they consider the case $N = 3$. The 3-dimensional case was firstly faced by Druet \cite{D}.
In particular, the asymptotic analysis of blowing-up solutions ensures that the blow-up points are nothing but the critical points of a suitable function which involves the Green function of $-\Delta$ in $\Omega$ with Dirichlet boundary condition and the Robin function.  Furthermore, a counterpart of these asymptotic results are studied.  The first result in this direction is due to Rey that built in \cite{R} a solution that blows up at one non-degenerate critical point of the Robin function when $N\geq 5$ and $\lambda\to 0^+$.  In \cite{MP}, Musso and Pistoia constructed positive solutions with multiple blow-up points when $N\geq 5$ as $\lambda\to 0^+$ whereas, Musso and Salazar \cite{MS} considered the case of multiple blow-up points in dimension $N=3$ as $\lambda$ goes to the strictly positive number $\lambda^*$ predicted by Druet in \cite{D} (see also \cite{DDM} for the construction of one-peak solution as $\lambda\to \lambda^*$ in dimension $N=3$).  It is only very recently that the first true bubbling solutions have been constructed in dimension $N=4$ as $\lambda\to 0^+$, see \cite{PR} for the case of one peak solution and \cite{PRV} for the case of multiple peak solutions.

\vskip0.12in

For what concern sign-changing solutions of \eqref{pb01}, several existence results have been obtained for $N\geq4$. Indeed one can get sign-changing solutions for every $\lambda>0$, $\lambda\not\in\sigma(-\Delta)$ if $N=4$ and for every $\lambda>0$ if $N\geq 5$ (see \cite{CFP,SWW}).  The case $N=3$ presents even more difficulties  (as for positive solutions) and the question on what is the range of $\lambda$ for which a sign-changing solution of \eqref{pb01} exists in dimension $N=3$ has been raised by Brezis in a collection of his favorite open problem \cite{B}, which, as said by Brezis, is open for 40 years.  We point out that, very recently, by using the del Pino-Musso-Pacard-Pistoia sign-changing solutions of the Yamabe problem in \cite{DMPP2011} as the building blocks, Sun, Wei and Yang constructed infinitely many non-radial sign-changing solutions of the 3-dimensional Brezis-Nirenberg problem~\eqref{pb01} in $\Omega=B_1$ the unit ball in $\mathbb{R}^3$ and for any $\lambda>0$ in \cite{SWY}, which gives a complete answer to the Brezis' first open problem proposed in \cite{B}. 

\vskip0.12in

As for the multiplicity results, which were obtained mainly through non-standard variational techniques, the first contribution was due to Cerami, Fortunato, and Struwe in \cite{CFS}, in which they showed that the number of nontrivial solutions of \eqref{pb01}, for $N\geq 3$, is bounded below by the number of eigenvalues of $-\Delta$ belonging to $(\lambda, \lambda+S|\Omega|^{-\frac 2 N})$, where $S$ is the best constant for the Sobolev embedding and $|\Omega|$ is the Lebesgue measure of $\Omega$.  Moreover, in \cite{FJ} it was also proved that if $N\geq 4$, then for any $\lambda>0$ and for a suitable class of symmetric domain $\Omega$, problem \eqref{pb01} has infinitely many solutions of arbitrarily large energy and if $N\geq 7$ and $\Omega$ is a ball, then for each $\lambda>0$, problem \eqref{pb01} has infinitely many sign-changing radial solutions (see \cite{S}). Here the radial symmetry of the domain plays an essential role, therefore their methods do not work for general domains.  Furthermore, Cerami, Solimini and Struwe showed in \cite{CSS} that if $N\geq 6$ and $\lambda\in (0, \lambda_1(\Omega))$, problem \eqref{pb01} has a pair of least energy sign-changing solution. In the same paper the authors studied the multiplicity of nodal solutions proving the existence of infinitely many radial solutions when $\Omega$ is a ball centered at the origin. Moreover, Devillanova and Solimini in \cite{DS1} showed that, if $N\geq 7$ and $\Omega$ is an open regular subset of $\mathbb R^N$, problem \eqref{pb01} has infinitely many solutions for each $\lambda>0$.  For low dimensions, namely $N=4, 5, 6$ and in an open regular subset of $\mathbb R^N$ , in \cite{DS2}, Devillanova and Solimini proved the existence of at least $N+1$ pairs of solutions provided $\lambda$ is small enough.  This result was extended to all $\lambda > 0$  in \cite{CW} by Clapp and Weth.
Neither in \cite{DS1, DS2} nor in \cite{CW} there is an information on the kind of solutions obtained.  Recently, in \cite{SZ}, Schechter and Wenming Zou showed that in any bounded and smooth domain, for $N\geq 7$ and for each fixed $\lambda>0$, problem \eqref{pb01} has infinitely many sign changing solutions.

\vskip0.12in

Concerning the profile of sign-changing solutions some results have been obtained in \cite{BEP1, BEP2} for low energy solutions, namely solutions $u_\lambda$ such that
$\|u_\lambda\|_{H^1_0}^2\to 2S^{\frac N 2}$ as $\lambda\to 0$. More precisely in \cite{BEP1} it was considered the case $N=3$ and they proved
that these solutions concentrate and blow-up in two different points of $\Omega$, as $\lambda\to 0$, and have the asymptotic profile of two separate bubbles. A similar result is proved in \cite{BEP2} for $N\geq 4$ but assuming that the blow-up rate of the positive and negative part of $u_\lambda$ is the same.  This means that, for $N\geq 4$, a solution whose positive part and negative part blow-up at the same point with different speeds of concentration can exist. Indeed, by analyzing the asymptotic behavior of the least energy nodal radial solutions $u_\lambda$ in the ball, as $\lambda\to 0$ in dimension $N\geq 7$ one can prove (see \cite{Iac}) that their limit profile is that of a
{\it tower of two bubbles}. In \cite{IV1} it was shown that, in some symmetric bounded domains, a sign-changing bubble tower exists for $\lambda\to 0$ and $N\geq 7$ and moreover in dimensions $N=4, 5, 6$ this kind of solutions do not exist (see \cite{IP}).  It is worth pointing out that existence of nodal solutions with two nodal regions concentrating in two different points of the domain $\Omega$ as $\lambda\to 0$ has been obtained in \cite{CC}, \cite{MiPi} and \cite{BMP} while other king of sign-changing solutions for $N\geq 7$ that has a concentration on a boundary point (here the domain has a non-trivial topology since the boundary concentration is not always allowed) were studied in \cite{V} and \cite{MRV}.

\vskip0.12in

From the previous discussions it follows that the critical behavior and the lack of compactness of the Brezis-Nirenberg problem~\eqref{pb01} depend strongly on the geometry of the domain and on the dimensions, and in the physical ($N=3$) and low ($4\leq N\leq6$) dimensions, there are lots of ``nonstandard'' profiles of sign-changing solutions of the Brezis-Nirenberg problem~\eqref{pb01}.  Atkinson, Brezis and Peletier \cite{ABP}  (see also \cite{AY} for a different proof) showed that for $3\leq N\leq 6$ and when $\Omega$ is a ball, there is a $\lambda_0>0$ such that \eqref{pb01} has no sign-changing radial solutions for $\lambda\in (0, \lambda_0)$.
Moreover it was also studied the asymptotic behavior of the values $\lambda$ for $3\leq N\leq 6$ (see also \cite{BP}).  By letting $\lambda_m$ the eigenvalue of $-\Delta$ on $B_1$ which corresponds to a radial eigenfunctions with $m-1$ zeros and letting $u_\lambda^m$ be a solution of \eqref{pb01} with $m-1$ zeros, they showed that
\begin{itemize}
\item if $N=3$ then for every  $m\geq 2$ we have that $\lambda\to \left(\frac{2m-1}{2}\pi\right)^2$ as $\|u_\lambda^m\|_\infty\to\infty$;
\item if $N=4, 5$ then for every $m\geq 2$ we have that $\lambda\to\lambda_{m-1}$ as $\|u_\lambda^m\|_\infty\to\infty$;
\item if $N=6$ then for every $m\geq 2$ we have $\lambda\to\lambda_{m-1}^*$ as $\|u_\lambda^m\|_\infty\to\infty$  where $\lambda^*_{m-1}\in (0, \lambda_{m-1})$.
\end{itemize}
Moreover if $m=2$ (so $u_\lambda$ has only two nodal regions) Gazzola and Grunau \cite{GG} showed that $\lambda\to\lambda^+_1$ if $N=4$ while $\lambda\to\lambda_1^-$ if $N=5$.  These analysis show that concentration values which is different from zero for $N\geq4$ and different from $\lambda^*$ for $N=3$ predicted by Druet in \cite{D} can appear, at least for radial solutions in the physical ($N=3$) and low ($4\leq N\leq6$) dimensions and thus, it is reasonable to first understand what is the asymptotic behavior of nodal radial solutions in the physical ($N=3$) and low ($4\leq N\leq6$) dimensions as $\lambda$ converges to the limit value which is different from zero for $N\geq4$ and different from $\lambda^*$ for $N=3$.

\vskip0.12in

In \cite{IP1}, the authors showed that given $u_\lambda$, a nodal radial solutions of \eqref{pb01} having two nodal regions and such that $u_\lambda(0)>0$ then
\begin{itemize}
\item[(i)] if $N = 6$ then $\lambda_2^*\in (0, \lambda_1(B_1))$ and, as $\lambda\to\lambda_2^*$, the positive part of $u_\lambda$ concentrates at the center of the ball, $\|u_\lambda^+\|_\infty\to\infty$, and a suitable rescaling of $u_\lambda^+$ converges to the standard positive solution of the critical problem in $\mathbb R^N$. Instead the negative part $u_\lambda^-$ converges to the unique positive solution of \eqref{pb01} in $B_1$, as $\lambda\to\lambda_2^*$;
\item[(ii)] if $N = 4,5$ then $u_\lambda^+$ behaves as for the case $N = 6$, while $u_\lambda^-$ converges to zero uniformly in $B_1$ as $\lambda\to\lambda_1(B_1)$;
\item[(iii)] if $N = 3$ then $\lambda_2^*= \frac 9 4 \lambda_1(B_1)$ and $u_\lambda^+$ behaves as for the case $N = 6$, while $u_\lambda^-$ converges
to zero uniformly in $B_1$.
\end{itemize}
In a recent paper \cite{AGGPV}, Amadori et al. gave a completed description of concentration values and bubbling phenomenon of the Brezis-Nirenberg problem \eqref{pb01} in the radial setting for all $N\geq 3$, which refine the results in \cite{AY, ABP, BP}.

\vskip0.12in

The existence of a solution predicted by these asymptotic analysis in \cite{AY, ABP, AGGPV, BP, IP1} was first studied in \cite{IV} where the authors considered the case $N=4, 5$ showing that in general (symmetric) bounded domains of $\mathbb R^N$ ($N=4, 5$), there exist a sign-changing solutions of problem \eqref{pb01} having, as $\lambda\to\lambda_1^\pm(\Omega)$, the following asymptotic profile $$u_\lambda\sim \mathcal W_{\mu, 0}-\tau e_1$$ where $\mathcal W_{\mu, 0}$ is the projection of the standard Aubin-Talenti bubble in $H^1_0(\Omega)$, $\mu, \tau\approx0$ and $e_1$ is the first eigenfunction of $-\Delta$ with Dirichlet boundary condition.  Very recently, in \cite{LVWW} it was generalized the result in \cite{IV}.  Indeed, by using the Lyapunov-Schmidt reduction arguments, we construct multi-bump bubbling solutions of \eqref{pb01}, which look like
\begin{eqnarray*}
u_\lambda\approx\sum_{j=1}^{k}\beta_j\mathcal{W}_{\mu_j,\xi_j}+\sum_{i=1}^{m}\tau_ie_i
\end{eqnarray*}
as $\lambda$ is close to any fixed eigenvalue of the Laplacian operator $-\Delta$ with Dirichlet boundary condition, where $\{e_i\}$ is an orthogonal system related to the fixed eigenvalue, $\tau_l\approx0$ for all $1\leq l\leq m$, $\beta_j=\pm1$, $\mathcal{W}_{\mu_j,\xi_j}$ is the projection of the standard Aubin-Talenti bubble in $H^1_0(\Omega)$, $\mu_j\approx0$ and $\xi_j\to\xi_{j,0}\in\Omega$ for all $1\leq j\leq k$.  Here, the governing functions of the bubbling phenomenon of $4D$ and $5D$ Brezis-Nirenberg problem as $\lambda$ approaches any eigenvalue of the Laplacian operator $-\Delta$ with Dirichlet boundary condition are also found by studying some variational problems associated to it.

\subsection{New results in dimension six}
The existence of a solution predicted by these asymptotic analysis in \cite{AY, ABP, AGGPV, BP, IP1} for dimension $N=6$ is more delicate, since it is known in  \cite{AY, ABP, AGGPV, BP, IP1} for the ODE setting that the profile of the bubbling solution of the $6D$ Brezis-Nirenberg problem should be very different from that of the $4D$ and $5D$ cases.  Besides, the computations in \cite{IV,LVWW} imply that the zero weak limit and the blow-down part of the ansatz can not balance the bubbles near the blow-up points, and the blow-up analysis in \cite{D2003} reveals a competitive mechanism between contributions of nonzero weak limit and linear term in dimension $N=6$.  Thus, a nonzero weak limit is needed in constructing a ture blow-up solution of the $6D$ Brezis-Nirenberg problem~\eqref{pb}, which reads as
\begin{eqnarray}\label{pb}
\left\{
\aligned
&-\Delta u=\lambda u+|u|u,\quad\text{in }\Omega,\\
&u=0\quad\text{on }\partial\Omega.
\endaligned
\right.
\end{eqnarray}
Pistoia and Vaira successfully proved in \cite{PV2021} that \eqref{pb} has a bubbling solution, which looks like
\begin{eqnarray*}
u_{\lambda}\approx u_{\lambda_0}-\mathcal{W}_{\mu,\xi}+(\lambda-\lambda_0) v_{\lambda_0}
\end{eqnarray*}
as $\lambda\to\lambda_0$ in generic bounded domains in the sense that any solution of \eqref{pb} is non-degenerate, where $\mu\approx0$, $u_{\lambda_0}$ is a ground-state solution of \eqref{pb} satisfying
\begin{eqnarray}\label{lambda0}
\lambda_0=2\max_{\Omega}u_{\lambda_0}(x)=u_{\lambda_0}(\xi_0)
\end{eqnarray}
and $v_{\lambda_0}$ is the unique solution of the following equation:
\begin{eqnarray*}
\left\{
\aligned
&-\Delta v=\left(\lambda_0+2|u_{\lambda_0}|\right)v+u_{\lambda_0},\quad&\text{in }\Omega,\\
&v=0\quad&\text{on }\partial\Omega.
\endaligned
\right.
\end{eqnarray*}
To complete the construction, it is assumed in \cite{PV2021} that $2v_{\lambda_0}(\xi_0)\neq 1$, which is naturally satisfied in the ODE setting (cf. \cite{AGGPV}).  We remark that, to our best knowledge, the introduction of nonzero weak limits in constructing bubbling solution of critical elliptic equtions first appeared in \cite{V} and can be also found in \cite{GLW2013,RV,MRV} and the references therein.

\vskip0.12in

In this paper, we shall improve the results in \cite{PV2021} in two fonts by constructing multi-bubble solutions without the assumption $2v_{\lambda_0}(\xi_0)\neq 1$ for concentration points $\xi_0$.  In order to state our main result, we first introduce necessary notations and assumptions.  By \cite[Theorem~1.2]{PV2021}, we know that for generic domains $\Omega$ with $\partial\Omega\in C^{3,\alpha}$ and $\alpha\in(0, 1)$, if $u_{\lambda}\in H^1_0(\Omega)$ solves \eqref{pb} for $\lambda>0$, then $u_{\lambda}$ is non-degenerate in $H^1_0(\Omega)$ in the sense that the Kernel of the operator $-\Delta-\left(\lambda+(2^*-1)|u_{\lambda}|^{2^*-2}\right)$, which is denoted by $\mathcal{K}_{\lambda}$, is empty in $H^1_0(\Omega)$.  Thus, we say that $\Omega$, with $\partial\Omega\in C^{3,\alpha}$ and $\alpha\in(0, 1)$, is {\bf a non-degenerate domain} of the Brezis-Nirenberg problem~\eqref{pb} if any solution of \eqref{pb} in this bounded domain $\Omega$ is non-degenerate.

\vskip0.12in

Let $u_{\lambda}$ be a solution of the $6D$ Brezis-Nirenberg problem~\eqref{pb} for $\lambda>0$ in a non-degenerate domain $\Omega$, then by the Fredholm alternative, the following equation
\begin{eqnarray}\label{eqnPreWu0026}
\left\{
\aligned
&-\Delta v=\left(\lambda+2|u_{\lambda}|\right)v+u_{\lambda},\quad&\text{in }\Omega,\\
&v=0\quad&\text{on }\partial\Omega,
\endaligned
\right.
\end{eqnarray}
has a unique solution, which is denoted by $v_{\lambda}$.  We introduce the sets
\begin{eqnarray*}
\Omega_{u_{\lambda},\pm\frac{\lambda}{2}}=\left\{x\in\Omega\mid u_{\lambda}(x)=\pm\frac{\lambda}{2}\text{ and }\nabla u_{\lambda}(x)=0\right\}.
\end{eqnarray*}
For any $\xi\in\Omega_{u_{\lambda},\pm\frac{\lambda}{2}}$, since $u_{\lambda}$ solves \eqref{pb} for $\lambda>0$, we have
\begin{eqnarray*}
\text{Trace}\left(\nabla^2u_{\lambda}(\xi)\right)=\Delta u_{\lambda}(\xi)\not=0.
\end{eqnarray*}
Thus, $\text{Ker}^{\perp}\left(\nabla^2u_{\lambda}(\xi)\right)\not=\emptyset$, where $\text{Ker}\left(\nabla^2u_{\lambda}(\xi)\right)$ is the kernel of the matrix $\nabla^2u_{\lambda}(\xi)$ and the orthogonality is in $\mathbb{R}^6$.  We use $\{e_1,e_2,\cdots e_6\}$ to denote the orthogonal system on $\mathbb{S}^{5}$ such that $\{e_1,e_2,\cdots e_l\}$ and $\{e_{l+1}, e_{l+2}\cdots e_6\}$ are the basis of $\text{Ker}^{\perp}\left(\nabla^2u_{\lambda}(\xi)\right)$ and $\text{Ker}\left(\nabla^2u_{\lambda}(\xi)\right)$, respectively, where $1\leq l\leq 6$.  We also introduce the sets
\begin{eqnarray*}
\Omega_{v_{\lambda},\pm\frac{1}{2}}=\left\{x\in\Omega_{u_{\lambda},\pm\frac{\lambda}{2}}\mid v_{\lambda}(x)=\pm\frac{1}{2}\right\}
\end{eqnarray*}
and
\begin{eqnarray*}
\Omega_{u_{\lambda},v_{\lambda},\pm}=\left\{\xi\in\Omega_{u_{\lambda},\pm\frac{\lambda}{2}}\mid \nabla v_{\lambda}(\xi)\in\text{Ker}^{\perp}\left(\nabla^2u_{\lambda}(\xi)\right)\right\}.
\end{eqnarray*}
We say that $u_{\lambda}$ is an {\bf essentially non-degenerate} solution of the $6D$ Brezis-Nirenberg problem~\eqref{pb} in the non-degenerate domain $\Omega$ if
\begin{eqnarray}\label{eqnPreWu0009}
\Omega_{u_{\lambda},*}:=\Omega_{u_{\lambda},+}\cup \Omega_{u_{\lambda},-}\not=\emptyset,
\end{eqnarray}
where
\begin{eqnarray}\label{eqnPreWu0010}
\Omega_{u_{\lambda},\pm}=\Omega_{u_{\lambda},\pm\frac{\lambda}{2}}\setminus\left(\Omega_{v_{\lambda},\pm\frac{1}{2}}\cap\Omega_{u_{\lambda},v_{\lambda},\pm}\right).
\end{eqnarray}
Our first result introduces the sufficient and necessary condition of a solution $u_\lambda$ of \eqref{pb} to be essentially non-degenerate.
\begin{theorem}\label{thm1}
Let $u_{\lambda}$ be a solution of the $6D$ Brezis-Nirenberg problem~\eqref{pb} in the non-degenerate domain $\Omega$.  Then $u_{\lambda}$ is essentially non-degenerate if and only if $\Omega_{u_{\lambda},\frac{\lambda}{2}}\cup\Omega_{u_{\lambda},-\frac{\lambda}{2}}\not=\emptyset$.
\end{theorem}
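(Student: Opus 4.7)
The forward direction follows immediately from the inclusion $\Omega_{u_\lambda,\pm}\subseteq\Omega_{u_\lambda,\pm\frac{\lambda}{2}}$ built into the definitions, which gives $\Omega_{u_\lambda,*}\subseteq\Omega_{u_\lambda,\frac{\lambda}{2}}\cup\Omega_{u_\lambda,-\frac{\lambda}{2}}$, so $\Omega_{u_\lambda,*}\neq\emptyset$ forces $\Omega_{u_\lambda,\frac{\lambda}{2}}\cup\Omega_{u_\lambda,-\frac{\lambda}{2}}\neq\emptyset$.

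For the converse I plan to argue by contradiction. Assume, without loss of generality, $\Omega_{u_\lambda,\frac{\lambda}{2}}\neq\emptyset$ and $\Omega_{u_\lambda,+}=\emptyset$; then every $\xi\in\Omega_{u_\lambda,\frac{\lambda}{2}}$ would have to satisfy both $v_\lambda(\xi)=\tfrac{1}{2}$ and $\nabla v_\lambda(\xi)\in\mathrm{Ker}^\perp(\nabla^2 u_\lambda(\xi))$. The starting point is the identity $\partial_\lambda u_\lambda=v_\lambda$, obtained by differentiating \eqref{pb} with respect to $\lambda$: the derivative $\partial_\lambda u_\lambda$ solves the same boundary value problem \eqref{eqnPreWu0026} that defines $v_\lambda$, with zero Dirichlet data, and uniqueness follows from the Fredholm alternative together with the non-degeneracy of $\Omega$. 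Through this identity, the two bad conditions are precisely the first-order compatibility conditions one obtains by formally differentiating $u_\mu(\xi(\mu))=\mu/2$ and $\nabla u_\mu(\xi(\mu))=0$ along a would-be smooth $\mu$-family in $\Omega_{u_\mu,\mu/2}$ through $\xi=\xi(\lambda)$, using $\partial_\mu u_\mu = v_\mu$ and $\nabla u_\lambda(\xi)=0$ in the first, and $\nabla v_\lambda(\xi)=-\nabla^2 u_\lambda(\xi)\,\xi'(\lambda)\in\mathrm{Im}(\nabla^2 u_\lambda(\xi))=\mathrm{Ker}^\perp(\nabla^2 u_\lambda(\xi))$ in the second. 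Their simultaneous validity thus encodes the infinitesimal persistence of the concentration locus at $\xi$.

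The plan is then to convert this infinitesimal persistence into a non-trivial element of $\mathrm{Ker}(L_\lambda)\cap H^1_0(\Omega)$, where $L_\lambda=-\Delta-(\lambda+2|u_\lambda|)$, thereby contradicting the non-degeneracy of $\Omega$. Picking $y\in\mathbb{R}^6$ with $\nabla^2 u_\lambda(\xi)\,y=\nabla v_\lambda(\xi)$ (possible thanks to the second bad condition) and setting $\Phi:=\partial_y u_\lambda-(v_\lambda-\tfrac{1}{2})$, a direct computation exploiting $L_\lambda\partial_y u_\lambda=0$ (from differentiating the PDE) together with $L_\lambda(v_\lambda-\tfrac{1}{2})=u_\lambda+\tfrac{\lambda}{2}+|u_\lambda|$ shows $\Phi(\xi)=|\nabla\Phi(\xi)|=0$ and produces an explicit formula for $L_\lambda\Phi$. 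One then cancels this inhomogeneity by adding suitable multiples of $u_\lambda$ (using $L_\lambda u_\lambda=-|u_\lambda|u_\lambda$), of further directional derivatives at $\xi$, and of analogous objects attached to any points of $\Omega_{u_\lambda,-\frac{\lambda}{2}}$, while correcting the non-zero boundary values of $\partial_y u_\lambda$ so that the final function lies in $H^1_0(\Omega)$. The main obstacle is precisely this last step: matching boundary data without destroying either the kernel property of $L_\lambda$ or the second-order vanishing at $\xi$, and without trivializing the construction. The algebraic conditions $v_\lambda(\xi)=\tfrac{1}{2}$ and $\nabla v_\lambda(\xi)\in\mathrm{Ker}^\perp(\nabla^2 u_\lambda(\xi))$ are exactly the compatibility constraints that should allow this correction to close, delivering the contradiction with the non-degeneracy of $\Omega$ and hence the converse implication.
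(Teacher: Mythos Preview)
Your forward direction is fine and coincides with the paper's. For the converse, your instinct to build a test function out of directional derivatives of $u_\lambda$ combined with $v_\lambda$ is exactly right, but both the specific combination and the contradiction mechanism are off, and the gaps you flag are real.

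The paper takes
\[
w_\eta(x)=\tfrac{1}{2}(x-\eta)\cdot\nabla u_\lambda(x)+u_\lambda(x)-\lambda v_\lambda(x),\qquad \eta\in\mathbb{R}^6,
\]
which satisfies $L_\lambda w_\eta=0$ \emph{exactly} in $\Omega$: the piece $\tfrac{1}{2}x\cdot\nabla u_\lambda+u_\lambda$ is the scaling generator, for which $L_\lambda(\tfrac12 x\cdot\nabla u_\lambda+u_\lambda)=\lambda u_\lambda$ (the defect from the broken scaling invariance), the translation piece $-\tfrac{1}{2}\eta\cdot\nabla u_\lambda$ is already in $\ker L_\lambda$, and $\lambda v_\lambda$ is precisely the correction with $L_\lambda(\lambda v_\lambda)=\lambda u_\lambda$. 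Your $\Phi=y\cdot\nabla u_\lambda-(v_\lambda-\tfrac12)$ keeps only the translation part and a constant, so $L_\lambda\Phi=-(u_\lambda+\tfrac{\lambda}{2}+|u_\lambda|)=-(2u_\lambda^++\tfrac{\lambda}{2})$, which is \emph{not} a multiple of $L_\lambda u_\lambda=-|u_\lambda|u_\lambda$; the cancellation scheme you sketch (``multiples of $u_\lambda$, of further directional derivatives, \dots'') cannot remove this inhomogeneity.

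More importantly, the paper's contradiction is \emph{not} obtained by landing in $\mathrm{Ker}(L_\lambda)\cap H^1_0(\Omega)$. That route is essentially self-defeating: non-degeneracy says this space is $\{0\}$, so any boundary-matching correction that puts you there forces the function to vanish identically, and your conditions $\Phi(\xi)=|\nabla\Phi(\xi)|=0$ give no obstruction to that. Instead the paper keeps the nonzero boundary data of $w_\eta$, uses the free parameter $\eta$ to achieve $\nabla w_{\eta_0}(\xi)=\tfrac12\nabla^2u_\lambda(\xi)(\xi-\eta_0)-\lambda\nabla v_\lambda(\xi)=0$ (possible precisely because $\nabla v_\lambda(\xi)\in\mathrm{Ker}^\perp(\nabla^2u_\lambda(\xi))=\mathrm{Im}\,\nabla^2u_\lambda(\xi)$), and then derives a contradiction from the strong maximum principle/Hopf lemma: a nontrivial solution of $L_\lambda w=0$ cannot have an interior zero where the gradient also vanishes. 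Non-degeneracy of $\Omega$ enters only through the existence of $v_\lambda$, not through the endgame.

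So the missing idea is twofold: add the scaling term $\tfrac12 x\cdot\nabla u_\lambda+u_\lambda$ and replace $v_\lambda-\tfrac12$ by $\lambda v_\lambda$ to get an exact solution of $L_\lambda w=0$, and then abandon the $H^1_0$-kernel strategy in favor of a Hopf-type argument at the critical zero $\xi$.
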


\vskip0.12in

In the ODE setting, that is, $\Omega=\mathbb{B}(0,1)$ is the unit ball in $\mathbb{R}^6$, it is well known that
\begin{eqnarray*}
\mathbb{B}(0,1)_{u_{\lambda_0},v_{\lambda_0},+}=\mathbb{B}(0,1)_{u_{\lambda_0},\frac{\lambda_0}{2}}=\{0\}\quad\text{and}\quad\mathbb{B}(0,1)_{u_{\lambda_0},v_{\lambda_0},-}=\mathbb{B}(0,1)_{u_{\lambda_0},-\frac{\lambda_0}{2}}=\emptyset,
\end{eqnarray*}
where $u_{\lambda_0}$ is the unique positive ground-state solution of the $6D$ Brezis-Nirenberg problem~\eqref{pb} in $\Omega=\mathbb{B}(0,1)$.  Moreover, it has been proved in \cite[Proposition~4.2]{AGGPV}, by adapting the ODE argument, that
\begin{eqnarray*}
\mathbb{B}(0,1)_{v_{\lambda_0},\frac{1}{2}}=\emptyset,
\end{eqnarray*}
which implies that $u_{\lambda_0}$, the unique positive ground-state solution of the $6D$ Brezis-Nirenberg problem~\eqref{pb} in $\Omega=\mathbb{B}(0,1)$, is essentially non-degenerate.  Thus, Theorem~\ref{thm1} can be seen as the generalization of \cite[Proposition~4.2]{AGGPV} from the positive ground-state solution in the ODE setting to arbitrary solutions in the PDE setting.  Clearly, to prove Theorem~\ref{thm1} for arbitrary solutions in the PDE setting, we need a PDE argument.  The {\it crucial} trick in this proof is the introduction of the parameter $\eta\in\bbr^N$, which allows us to construct a smooth solution $w_{\eta_0}(\xi)$ of the equation~\eqref{eqnPreWu0027} such that $ w_{\eta_0}(\xi)=0$ and $\nabla w_{\eta_0}(\xi)=0$.

\vskip0.12in

It is known in \cite[Proposition~3.2]{PV2021} that for every bounded domain $\Omega$, there exists $\lambda_0\in (0, \lambda_1)$ such that $\Omega_{u_{\lambda_0},\frac{\lambda_0}{2}}\not=\emptyset$, where $u_{\lambda_0}$ is the positive ground-state solution of the $6D$ Brezis-Nirenberg problem~\eqref{pb} and $\lambda_1:=\lambda_1(\Omega)$ is the first eigenvalue of the Laplacian operator $-\Delta$ with Dirichlet boundary condition.  Thus, by Theorem~\ref{thm1}, the $6D$ Brezis-Nirenberg problem~\eqref{pb} in the non-degenerate domain $\Omega$ always has a solution which is essentially non-degenerate.

\vskip0.12in

Even though we have not proved that
\begin{eqnarray}\label{nq}
\Omega_{u_{\lambda_0},\frac{\lambda_0}{2}}\setminus\Omega_{v_{\lambda_0},\frac{1}{2}}\not=\emptyset
\end{eqnarray}
for positive ground-state solutions of the Brezis-Nirenberg problem~\eqref{pb} on any non-degenerate domains, as excepted in \cite{PV2021}, Theorem~\ref{thm1} is still good enough to improve the main results in \cite{PV2021}.

\vskip0.12in

In what follows, we let $\Omega$ be a non-degenerate domain of the $6D$ Brezis-Nirenberg problem~\eqref{pb} and $\lambda_0>0$ such that the $6D$ Brezis-Nirenberg problem~\eqref{pb} in the non-degenerate domain $\Omega$ has a solution $u_{\lambda_0}$ with $\Omega_{u_{\lambda_0},\frac{\lambda_0}{2}}\cup\Omega_{u_{\lambda_0},-\frac{\lambda_0}{2}}\not=\emptyset$.  Moreover, by classical regularity theorem of the elliptic equations, we also know that there are only finitely many points in $\Omega_{u_{\lambda_0},\frac{\lambda_0}{2}}\cup\Omega_{u_{\lambda_0},-\frac{\lambda_0}{2}}$.  We denote the number of points in $\Omega_{u_{\lambda_0},\frac{\lambda_0}{2}}\cup\Omega_{u_{\lambda_0},-\frac{\lambda_0}{2}}$ by $k(u_{\lambda_0})$.

\vskip0.12in

We also denote by $v_{\lambda_0}$ the unique solution of the problem \eqref{eqnPreWu0026} with $\lambda=\lambda_0$ and by $w_{\lambda_0}$ the unique solution of the following problem
\begin{eqnarray}\label{eqnPreWu0029}
\left\{
\aligned
&-\Delta w=\left(\lambda+2|u_{\lambda_0}|\right)w+v_{\lambda_0}+\text{sgn}(u_{\lambda_0}(x))v_{\lambda_0}^2,\quad\text{in }\Omega,\\
&v=0\quad\text{on }\partial\Omega,
\endaligned
\right.
\end{eqnarray}
Besides, as in \cite{LVWW}, we denote the standard Aubin-Talenti bubble for every $\mu>0$ and $\xi\in\Omega$ by $\mathcal{U}_{\mu,\xi}$ while, we denote its projection into $H^1_0(\Omega)$ by $\mathcal{W}_{\mu,\xi}$.  It is well known that (see \cite{A}, \cite{CGS}, \cite{T}) the Aubin-Talenti bubble $\mathcal{U}_{\mu,\xi}$ is explicitly given by
\begin{eqnarray}\label{talenti}
\mathcal{U}_{\mu,\xi}=\alpha_N\mu^{\frac{N-2}{2}}\left(\frac{1}{\mu^{2}+|x-\xi|^2}\right)^{\frac{N-2}{2}}=\mu^{-\frac{N-2}{2}}\mathcal{U}_{1,0}\left(\frac{x-\xi}{\mu}\right)
\end{eqnarray}
with $\alpha_N=[N(N-2)]^{\frac{N-2}{4}}$.  Now, our main result in this paper reads as
\begin{theorem}\label{thm2}
Let $\Omega$ be a non-degenerate domain of the $6D$ Brezis-Nirenberg problem~\eqref{pb} and $\lambda_0>0$ such that the $6D$ Brezis-Nirenberg problem~\eqref{pb} in the non-degenerate domain $\Omega$ has an essentially non-degenerate solution $u_{\lambda_0}$.  Then for any $\{\xi_{j,0}\}_{1\leq j\leq k}\subset\Omega_{u_{\lambda_0},*}$ with $1\leq k\leq k(u_{\lambda_0})$, where $\Omega_{u_{\lambda_0},*}$ is given by \eqref{eqnPreWu0009}, the $6D$ Brezis-Nirenberg problem~\eqref{pb} in the non-degenerate domain $\Omega$ has a solution of the form
\begin{eqnarray}\label{eqnPreWu0001}
u_{\lambda}=u_{\lambda_0}+\sum_{j=1}^{k}\beta_j\mathcal{W}_{\mu_j(\lambda),\xi_j(\lambda)}+\ve v_{\lambda_0}+\ve^2w_{\lambda_0}+\varphi_{\lambda},
\end{eqnarray}
where $v_{\lambda_0}$ and $w_{\lambda_0}$ are given by \eqref{eqnPreWu0026} and \eqref{eqnPreWu0029}, respectively, $\ve=\lambda-\lambda_0$, $\|\nabla\varphi_{\lambda}\|_2\to0$ as $\ve\to0$ and $\lim_{\ve\to0}\max_{1\leq j\leq k}\{\mu_j(\lambda)\}=0$ with $\liminf_{\lambda\to\lambda_0}\frac{\max_{1\leq j\leq k}\{\mu_j(\lambda)\}}{\min_{1\leq j\leq k}\{\mu_j(\lambda)\}}>0$.
Moreover,
\begin{eqnarray*}
\beta_j=\left\{\aligned
&-1,\quad &\xi_{j,0}\in\Omega_{u_{\lambda_0},+},\\
&1,\quad &\xi_{j,0}\in\Omega_{u_{\lambda_0},-},
\endaligned
\right.
\end{eqnarray*}
where $\Omega_{u_{\lambda_0},\pm}$ are given by \eqref{eqnPreWu0010},
and
\begin{enumerate}
\item[$(1)$]\quad $\xi_j(\lambda)=\xi_{j,0}$, $\frac{\ve}{|\ve|}=-\text{sgn}\left(\frac{1}{2}+\beta_jv_{\lambda_0}(\xi_{j,0})\right)$ and $\mu_j(\lambda)=|\ve|\tau_j$ with $\tau_j\to\tau_{j,0}$ as $\ve\to0$ and $\tau_{j,0}$ being the unique global minimum point of the function
\begin{eqnarray*}
P_j(\tau)=-d_1\left|\frac{1}{2}+\beta_jv_{\lambda_0}(\xi_j)\right|\tau^2+\frac{11}{9}d_2\tau^3
\end{eqnarray*}
for $\xi_{j,0}\not\in\left(\Omega_{v_{\lambda_0},\frac{1}{2}}\cup\Omega_{v_{\lambda_0},-\frac{1}{2}}\right)$,
\item[$(2)$]\quad $\xi_j(\lambda)=\xi_{j,0}+|\ve|^{s}e_{j,l_{j}+1}$ with $e_{j,l_{j}+1}\in\mathbb{S}^5\cap\text{Ker}\left(\nabla^2u_{\lambda_0}(\xi_{j,0})\right)$ and $\frac{1}{2}<s<1$, $\frac{\ve}{|\ve|}=-\text{sgn}\left(\nabla v_{\lambda_0}(\xi_{j,0})\cdot e_{j,l_{j}+1}\right)$, $\mu_j(\lambda)=|\ve|^{1+s}\tau_j$ with $\tau_j\to\tau_{j,0}$ as $\ve\to0$ and $\tau_{j,0}$ being the unique global minimum point of the function
\begin{eqnarray*}
Q_j(\tau)=-d_1\left|\nabla v_{\lambda_0}(\xi_{j,0})\cdot e_{j,l_{j}+1}\right|\tau^2+\frac{11}{9}d_2\tau^3
\end{eqnarray*}
for $\xi_{j,0}\in\left(\Omega_{v_{\lambda_0},\frac{1}{2}}\setminus\Omega_{u_{\lambda_0},v_{\lambda_0},+}\right)\cup\left(\Omega_{v_{\lambda_0},-\frac{1}{2}}\setminus\Omega_{u_{\lambda_0},v_{\lambda_0},-}\right)$,
\end{enumerate}
where $d_1,d_2>0$ are constants with precise formulas.
\end{theorem}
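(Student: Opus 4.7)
The plan is to prove Theorem~\ref{thm2} by a Lyapunov--Schmidt reduction around the ansatz
\begin{eqnarray*}
V_{\ve,\vec\mu,\vec\xi} = u_{\lambda_0} + \sum_{j=1}^{k}\beta_j\mathcal{W}_{\mu_j,\xi_j} + \ve\, v_{\lambda_0} + \ve^2 w_{\lambda_0},
\end{eqnarray*}
looking for a true solution $u_\lambda = V_{\ve,\vec\mu,\vec\xi} + \varphi_\lambda$ with $\varphi_\lambda\in H^1_0(\Omega)$ orthogonal to the approximate kernel spanned by $\partial_{\mu_j}\mathcal{W}_{\mu_j,\xi_j}$ and $\partial_{\xi_{j,i}}\mathcal{W}_{\mu_j,\xi_j}$. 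The linearized operator $-\Delta - \lambda - 2|V_{\ve,\vec\mu,\vec\xi}|$ at the ansatz decouples, up to errors vanishing with $\ve$, into the usual bubble linearizations whose kernels are exactly that span, and the operator $-\Delta-(\lambda_0+2|u_{\lambda_0}|)$ on the weak-limit part, which is invertible by the assumed non-degeneracy of $\Omega$. Gluing these pieces by the localization strategy of \cite{LVWW,PV2021} yields a uniform invertibility estimate for the full linearization on the orthogonal complement of the approximate kernel as $\ve\to 0$.

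The first substantive computation is the expansion of the error $\mathcal{E}:=-\Delta V_{\ve,\vec\mu,\vec\xi}-\lambda V_{\ve,\vec\mu,\vec\xi}-|V_{\ve,\vec\mu,\vec\xi}|V_{\ve,\vec\mu,\vec\xi}$. Away from the bubble cores, Taylor expansion of $|V|V$ about $u_{\lambda_0}$, combined with \eqref{pb}, \eqref{eqnPreWu0026} and \eqref{eqnPreWu0029}, cancels all contributions of orders $1$, $\ve$ and $\ve^2$ --- this is precisely why $v_{\lambda_0}$ and $w_{\lambda_0}$ are introduced with those definitions. Near each bubble center $\xi_j\approx \xi_{j,0}\in\Omega_{u_{\lambda_0},*}$, using $u_{\lambda_0}(\xi_{j,0})=-\beta_j\lambda_0/2$ and $\nabla u_{\lambda_0}(\xi_{j,0})=0$, I would expand
\begin{eqnarray*}
u_{\lambda_0}(x)=-\tfrac{\beta_j\lambda_0}{2}+\tfrac12(x-\xi_{j,0})^{T}\nabla^2 u_{\lambda_0}(\xi_{j,0})(x-\xi_{j,0})+o(|x-\xi_{j,0}|^2),
\end{eqnarray*}
and analogously for $v_{\lambda_0}$; the dominant local coefficient reads $\tfrac12+\beta_j v_{\lambda_0}(\xi_{j,0})$ (case (1)), and when this vanishes, because $v_{\lambda_0}(\xi_{j,0})=\pm 1/2$, the next non-trivial coefficient is $\nabla v_{\lambda_0}(\xi_{j,0})\cdot e_{j,l_j+1}$ along a direction in $\mathrm{Ker}(\nabla^2 u_{\lambda_0}(\xi_{j,0}))$ (case (2)).

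With the error controlled in an appropriate weighted norm, the infinite-dimensional problem is solved by a standard contraction to produce $\varphi_\lambda=\varphi_\lambda(\vec\mu,\vec\xi)$ of sub-leading order. The scalings are then forced by matching the bubble self-energy against the leading coefficient identified above: in case (1) one obtains $\mu_j=|\ve|\tau_j$ with $\xi_j=\xi_{j,0}$, while in case (2) the vanishing of $\tfrac12+\beta_j v_{\lambda_0}(\xi_{j,0})$ demands the off-center shift $\xi_j=\xi_{j,0}+|\ve|^{s}e_{j,l_j+1}$ and the smaller bubble $\mu_j=|\ve|^{1+s}\tau_j$ with $\tfrac12<s<1$, so that the next-order coefficient $\nabla v_{\lambda_0}(\xi_{j,0})\cdot e_{j,l_j+1}$ is activated. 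The signs $\beta_j$ and $\ve/|\ve|$ are fixed by requiring the quadratic part of the reduced functional to be negative, so that the cubic part dominates and a non-trivial minimum exists.

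Finally, projecting the full equation back onto the approximate kernel gives the reduced system for $(\vec\tau,\vec\xi,e_{j,l_j+1})$. After dividing by the appropriate power of $\ve$, the leading expansion equals $\sum_j P_j(\tau_j)$ in case (1) and $\sum_j Q_j(\tau_j)$ in case (2), where the cubic coefficient $\tfrac{11}{9}d_2$ is the standard $6D$ self-interaction constant of two projected Aubin--Talenti bubbles and $d_1$ comes from the pairing of $\mathcal{W}_{\mu_j,\xi_j}^2$ against the identified coefficient. Each such function has a unique non-trivial, non-degenerate critical point $\tau_{j,0}$ that is the global minimum (an elementary cubic computation), so the implicit function theorem produces $\tau_j(\lambda)\to\tau_{j,0}$ and closes the scheme. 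The hardest step I anticipate is precisely the bookkeeping of this expansion: one must verify that the bubble-bubble interactions (of order $O(\mu_j^{4})$, harmless thanks to $\liminf \max\mu_j/\min\mu_j>0$), the contribution of $\varphi_\lambda$, and the higher-order remainders coming from $u_{\lambda_0},v_{\lambda_0},w_{\lambda_0}$ all sit at strictly lower order than the terms generating $P_j$ and $Q_j$, so that these really govern the reduced equations.
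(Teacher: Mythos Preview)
Your overall scheme---Lyapunov--Schmidt reduction around the ansatz $V_{\ve,\vec\mu,\vec\xi}$, invertibility of the linearized operator by gluing the bubble kernel with the non-degeneracy of $u_{\lambda_0}$, contraction for $\varphi_\lambda$, then a variational reduced problem solved case by case---matches the paper's structure closely, and the scalings you identify in cases~(1) and~(2) are the right ones.

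There is, however, one concrete misidentification that would derail the computation. You write that the cubic coefficient $\tfrac{11}{9}d_2$ is ``the standard $6D$ self-interaction constant of two projected Aubin--Talenti bubbles.'' It is not. In the paper's energy expansion the pure bubble energy $\tfrac12\|\mathcal W_j\|^2-\tfrac13\|\mathcal W_j\|_{L^3}^3$ contributes only a constant plus $O(\bar\mu^4)$, and the bubble--bubble cross terms are $O(\bar\mu^{7/2})$. The $\mu_j^3$ term that produces $\tfrac{11}{9}d_2$ comes instead from the \emph{nonlinear interaction between the single bubble $\mathcal W_h$ and the weak limit $z_{\lambda_0}\approx u_{\lambda_0}$}, namely the quantity
\[
\int_{B_h}\Bigl[F(z_{\lambda_0}+\beta_h\mathcal W_h)-F(z_{\lambda_0})-F(\mathcal W_h)-f(\beta_h\mathcal W_h)z_{\lambda_0}-f(z_{\lambda_0})\beta_h\mathcal W_h\Bigr]\,dx,
\]
with $F(s)=\tfrac13|s|^3$. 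Because $f(u)=|u|u$ is only $C^{1,1}$, this integrand is \emph{not} $O(\mathcal W_h\cdot z_{\lambda_0}^2)$ globally; one must split $B_h$ along the level set $\{z_{\lambda_0}=\mathcal W_h\}$, locate that set at radius $\sim R_h^0\sqrt{\mu_h}$ with $R_h^0=(\alpha_6/|u_{\lambda_0}(\xi_{h,0})|)^{1/4}$, and integrate the two explicit pieces separately. This is the computation carried out in \cite{PV2021} and reproduced in the paper, and it is where both the exponent $3$ and the specific value $d_2=\alpha_6^{3/2}\omega_6|u_{\lambda_0}(\xi_{j,0})|^{3/2}$ (which depends on the weak limit, not only on the bubble) arise. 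If you try to extract the $\mu_j^3$ term from bubble self-energy or from a naive Taylor expansion of $|V|V$, you will find nothing at that order.

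Two minor remarks: the paper works throughout with the $H^1_0$/$L^{3/2}$ duality rather than a weighted norm, which is enough here; and your claim that bubble--bubble interactions are $O(\mu_j^4)$ is slightly optimistic---the paper gets $O(\bar\mu^{7/2})$---but this is still subleading and does not affect the argument.
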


Theorem \ref{thm2} is the generalization of \cite[Theorem 1.1]{PV2021} where such bubbling solutions were only constructed for the one-bubble case with the condition $2v_{\lambda_0}(\xi_0)\neq 1$ on the blow-up point.  For the multi-bubble case, if the concentration point $\xi_{j,0}$ still satisfies the condition $2v_{\lambda_0}(\xi_{j,0})\neq 1$, then we can exactly follow the arguments of \cite[Theorem 1.1]{PV2021} to complete the construction, by using the same ansatz as in \cite[Theorem 1.1]{PV2021}, with a slightly complicated computation which comes from the interaction between different bubbles.  However, since we are not able to prove that there always exists $\xi_0\in\Omega_{u_{\lambda_0},\pm\frac{\lambda_0}{2}}$ such that $2v_{\lambda_0}(\xi_{0})\neq \pm1$, we have to improve the ansatz in \cite[Theorem 1.1]{PV2021} to the next order term in \eqref{eqnPreWu0001}.  Our {\it key} observation is that by using the new ansatz \eqref{eqnPreWu0001} to expand the energy functional to the next order term, the leading order term in the energy expansion of the reduced problem will not vanish for essentially non-degenerate solutions, which always exist for generic bounded domains.

By Theorem~\ref{thm2}, we can see that the parameters of the constructed bubbling solutions of the $6D$ Brezis-Nirenberg problem~\eqref{pb} are implicitly coupled in the sense that the blow-up rates depend on the concentration points.  Since we have observed in \cite{LVWW} that the parameters of the constructed bubbling solutions of the $5D$ Brezis-Nirenberg problem~\eqref{pb01} are also implicitly coupled while, the parameters of the constructed bubbling solutions of the $4D$ Brezis-Nirenberg problem~\eqref{pb01} are fully coupled.  Thus, it seems that the dimension $N=5$ is {\it critical} in constructing bubbling solutions of the Brezis-Nirenberg problem which concentrates at a positive value $\lambda$ for $N\geq4$.

For any essentially non-degenerate solution $u_{\lambda_0}$ of the $6D$ Brezis-Nirenberg problem~\eqref{pb}, $u_{\lambda_0}$ is degenerate at any point in $\left(\Omega_{u_{\lambda_0},\frac{\lambda_0}{2}}\setminus\Omega_{u_{\lambda_0},v_{\lambda_0},+}\right)\cup\left(\Omega_{u_{\lambda_0},-\frac{\lambda_0}{2}}\setminus\Omega_{u_{\lambda_0},v_{\lambda_0},-}\right)$ by the definitions.  This is the reason that a free parameter $s$ is involved in the $(2)$ case of Theorem~\ref{thm2}.

\subsection{Open questions of \eqref{pb01}}
Even though the Brezis-Nirenberg problem~\eqref{pb01} has a long history since 1983 and many different results are available for it, there are still some gaps continue to make this problem challenging, which leaves many open questions, see, for example the interesting questions proposed in \cite{PV2021,SWY}.  Here, we will also list some other {\bf open questions}  about the Brezis-Nirenberg problem~\eqref{pb01}.

\subsubsection{Variational proofs of Brezis' first open problem}
Through a highly intricate and technical construction, Sun, Wei, and Yang provided a solution to Brezis’ first open problem. It is natural to ask
\begin{enumerate}
\item[] {\bf Open question~1}: Is there a more direct and possible variational proof of Brezis' first open problem?
\end{enumerate}

\subsubsection{Infinitely many sign-changing solutions of the Brezis-Nirenberg problem~\eqref{pb01}.}
In \cite{SWY}, it was proved that \eqref{pb01} in the unit ball $\Omega=\mathbb{B}(0,1)$ has infinitely many non-radial sign-changing solutions with arbitrary large energy for every $\lambda>0$.  The strategy in proving this result is to use the Kelvin transformation of the ``crown solution'', constructed by del Pino-Musso-Pacard-Pistoia in \cite{DMPP2011}, as the building block to disingularize two circles which are very colse to each other and the unit sphere.  When the domain is a three-dimensional cuboid (rectangular prism), Sun, Wei, and Yang \cite{SWY2} proved the existence of infinitely many sign-changing solutions. Their approach was to populate the domain with a large number of crown-type solutions.
Since there is no reason to think that this strategy will be invalid in general bounded domains, we want to ask
\begin{enumerate}
\item[] {\bf Open question~2}:\quad In a general bounded domain $\Omega$, if necessary, it could be convex, are there infinitely many   sign-changing solutions of \eqref{pb01} with arbitrary large energy for any $\lambda>0$?
\end{enumerate}

In dimension 4, the existence of  sign-changing solutions was proved for every $\lambda >0, \lambda \not \in \sigma (-\Delta)$ (\cite{CFP}). The missing case is
\begin{enumerate}
\item[] {\bf Open question~3}:\quad when $N=4, \lambda \in \sigma (-\Delta)$, are there sign-changing solutions for every $\lambda>0$?
\end{enumerate}

In \cite{SZ}, Schechter and Zou proved that for $N\geq 7, \lambda >0$ there are infinitely many sign-changing solutions. A natural equation is
\begin{enumerate}
\item[] {\bf Open equestion~4}: \quad In a general bounded domain, are there infinitely many sign-changing solutions  when $N\leq 6$?
\end{enumerate}

\subsubsection{Blow-up solutions of the Brezis-Nirenberg problem~\eqref{pb01} with finite energy.}
Since there is a $\lambda>0$ in the Brezis-Nirenberg problem~\eqref{pb01}, it is natural to consider $\lambda$ to be a parameter.  We say that $u_\lambda$ is a blow-up solution of the Brezis-Nirenberg problem~\eqref{pb01} with finite energy at a fixed parameter $\overline{\lambda}$ if $\|u_\lambda\|_{L^\infty(\Omega)}\to+\infty$ and $E_\lambda(u_\lambda)\lesssim1$ as $\lambda\to\overline{\lambda}$, where $E_\lambda(u)$ is the energy functional of the Brezis-Nirenberg problem~\eqref{pb01}.
Correspondingly, $\overline{\lambda}$ is called a concentration value with finite energy of the Brezis-Nirenberg problem~\eqref{pb01}.  In \cite{LVWW} and this paper, blow-up solutions of the Brezis-Nirenberg problem~\eqref{pb01} with finite energy at a fixed parameter $\overline{\lambda}>0$ for $4\leq N\leq 6$ are constructed.  The ansatz of the constructions in \cite{LVWW} and this paper is very different.  Taking into account the asymptotic behavior of blow-up solution of the Brezis-Nirenberg problem~\eqref{pb01} with finite energy at a fixed parameter in the radial setting (cf. \cite{AGGPV,IP1}), it is natural to think that the ansatz of the construction of the blow-up solution of the Brezis-Nirenberg problem~\eqref{pb01} with finite energy at a fixed parameter in dimension $N=3$ will be very different from that of $4\leq N\leq6$, and the blow-up solution of the Brezis-Nirenberg problem~\eqref{pb01} with finite energy can only exist at $\overline{\lambda}=0$ for $N\geq7$.  Thus, we also want to ask
\begin{enumerate}
\item[] {\bf Open question~5}:\quad In a general bounded domain $\Omega$, if necessary, it could be convex, is it possible to construct blow-up solutions of the Brezis-Nirenberg problem~\eqref{pb01} with finite energy at a fixed parameter which is different from $\lambda^*$ predicted by Druet in \cite{D} in dimension $N=3$?
\item[] {\bf Open question~6}:\quad In a general bounded domain $\Omega$, if necessary, it could be convex, does there exist blow-up solutions of the Brezis-Nirenberg problem~\eqref{pb01} with finite energy at a fixed parameter $\overline{\lambda}>0$ in dimension $N\geq7$?
\end{enumerate}
Besides, as far as we know the complete scenario of sign-changing solutions has been obtained only in the ODE setting by Esposito, Ghoussoub, Pistoia and Vaira
in \cite{EGPV} in dimensions $N\geq 7$ and by Amadori, Gladiali, Grossi, Pistoia and Vaira \cite{AGGPV} in lower dimensions $3\leq N\leq 6$, which completes the previous works of Atkinson, Brezis and Peletier \cite{ABP} and Iacopetti and Pacella \cite{IP1,IP}.  Thus, we also want to ask
\begin{enumerate}
\item[] {\bf Open question~7}:\quad In a general bounded domain $\Omega$, if necessary, it could be convex or even strictly star-sharped, what is the asymptotic behavior of blow-up solutions of the Brezis-Nirenberg problem~\eqref{pb01} with finite energy at a fixed parameter $\overline{\lambda}>0$?
\end{enumerate}

\subsubsection{Boundary concentrations of the Brezis-Nirenberg problem~\eqref{pb01}.}
In \cite{MRV,V}, the boundary concentration of sign-changing solutions of the Brezis-Nirenberg problem~\eqref{pb01} has been constructed as $\lambda\to0$ in dimension $N\geq7$.  This boundary concentration is observed by looking at the solution of the form:
\begin{eqnarray}\label{bd01}
u_{\lambda} \approx u_{0}-\sum_{j=1}^{k}\mathcal{W}_{\mu_j(\lambda),\xi_j(\lambda)}
\end{eqnarray}
where $u_{0}$ is a non-degenerate nontrivial solution of the Brezis-Nirenberg problem~\eqref{pb01} for $\lambda=0$.  We recall that the Brezis-Nirenberg problem~\eqref{pb01} for $\lambda=0$ only has trivial solution if $\Omega$ is strictly star-sharped, thus, the boundary concentration can only observed by at least assuming that $\Omega$ is not strictly star-sharped.
Since the ansatz in \eqref{bd01} is almost the same as that of \eqref{eqnPreWu0001}, we want to ask
\begin{enumerate}
\item[] {\bf Open question~8}:\quad In a general bounded domain $\Omega$, if necessary, it could have nontrivial topology, is it possible to construct boundary concentration of sign-changing solutions of the Brezis-Nirenberg problem~\eqref{pb01} at a fixed parameter $\lambda\geq0$ in dimensions $N\geq6$?
\end{enumerate}
On the other hand, the blow-up analysis in \cite{D2003} reveals that a competitive mechanism between contributions of nonzero weak limit and linear term can only exist in dimension $N=6$ and the ansatz of the constructions for $N=3$, $4\leq N\leq5$, $N=6$ and $N\geq7$ are very different, thus, we also want to ask
\begin{enumerate}
\item[] {\bf Open question~9}:\quad In a general bounded domain $\Omega$, if necessary, it could have nontrivial topology, does there exist boundary concentration of sign-changing solutions of the Brezis-Nirenberg problem~\eqref{pb01} at a fixed parameter $\lambda\geq0$ in dimensions $3\leq N\leq5$?
\end{enumerate}

\subsubsection{Bubble-tower solutions of the Brezis-Nirenberg problem~\eqref{pb01}.}
Bubble-tower solutions of the Brezis-Nirenberg problem~\eqref{pb01} for $N\geq7$ as $\lambda\to0$ were constructed in \cite{IV1,Pre}.  Moreover, it has been proved by Iacopetti and Pacella in \cite{IP} that the Brezis-Nirenberg problem~\eqref{pb01} has no bubble-tower solutions, with two bubbles blowing up at the same point, in low dimensions $4\leq N\leq 6$ as $\lambda\to0$.  Thus, we propose the following conjecture.
\begin{enumerate}
\item[] {\bf Conjecture~1}:\quad In a general bounded domain $\Omega$, bubble-tower solutions of the Brezis-Nirenberg problem~\eqref{pb01} can only exist for $N\geq7$ as $\lambda\to0$.
\end{enumerate}
Besides, there is no analysis of bubble-tower solutions of the Brezis-Nirenberg problem~\eqref{pb01} as $\lambda\to\overline{\lambda}>0$ for $N\geq4$ ($\lambda>\lambda_*$ for $N=3$), thus, we also want to ask
\begin{enumerate}
\item[] {\bf Open question~10}:\quad In a general bounded domain $\Omega$, does there exist bubble-tower solutions of sign-changing solutions of the Brezis-Nirenberg problem~\eqref{pb01} at a fixed paramter $\lambda>0$ for $N\geq4$ ($\lambda>\lambda_*$ for $N=3$)?
\end{enumerate}

\subsubsection{Least-energy sign-changing solutions of the Brezis-Nirenberg problem~\eqref{pb01}.}
Let
\begin{eqnarray}\label{eq0003}
E_\lambda(u)=\frac{1}{2}\|\nabla u\|^2_{L^2(\Omega)}-\frac{\lambda}{2}\|u\|^2_{L^2(\Omega)}-\frac{1}{2^*}\|u\|^{2^*}_{L^{2^*}(\Omega)}.
\end{eqnarray}
be the corresponding energy functional of the Brezis-Nirenberg problem~\eqref{pb01}  and
\begin{eqnarray*}
\mathcal{N}_{sg}=\left\{u\in H_0^1(\Omega)\backslash\{0\}\mid E'_\lambda(u^{\pm})u^{\pm}=0\text{ and }u^\pm\not=0\right\}
\end{eqnarray*}
be its associated sign-changing Nehari manifold. It is proved in \cite{SWW} that there exists $\hat{u}\in \mathcal{N}_{sg}$ such that
\begin{eqnarray*}
E_\lambda(\hat{u})=\inf_{\mathcal{N}_{sg}}E_\lambda(u)\quad\text{and}\quad E'_\lambda(\hat{u})=0\text{ in }H^{-1},
\end{eqnarray*}
where $H^{-1}$ is the dual space of $H^1_0(\Omega)$, provided
\begin{enumerate}
\item[(1)]\quad $N=4$ and $\lambda>0$ with $\lambda\not\in\sigma(-\Delta)$,
\item[(2)]\quad $N\geq5$ and $\lambda>0$.
\end{enumerate}
$\hat{u}$ is called the least-energy sign-changing solutions of the Brezis-Nirenberg problem~\eqref{pb01}.
Taking into account the Pohozaev identity, we want to ask
\begin{enumerate}
\item[] {\bf Open question~11}:\quad In a general bounded domain $\Omega$, if necessary, it could be convex or even strictly star-sharped, does there exist least-energy sign-changing solutions of the Brezis-Nirenberg problem~\eqref{pb01} for $N=4$ and $\lambda\in\sigma(-\Delta)$?
\end{enumerate}
Since it has been pointed out in \cite{AP} that the existence of least-energy sign-changing solutions of the Brezis-Nirenberg problem~\eqref{pb01} for $N=3$ is still unknown, we also want to ask
\begin{enumerate}
\item[] {\bf Open question~12}:\quad In a general bounded domain $\Omega$, if necessary, it could be convex or even strictly star-sharped, does there exist least-energy sign-changing solutions of the Brezis-Nirenberg problem~\eqref{pb01} for $N=3$ and $\lambda>0$?
\end{enumerate}

\subsubsection{Related studies of the Brezis-Nirenberg type problems.}
There are lots of Brezis-Nirenberg type problems, such as the fractional version:
\begin{equation}\label{frac-pb01}
\left\{\begin{aligned}
&(-\Delta)^s u = \lambda u+|u|^{2_s^*-2}u, \quad &\mbox{in}\,\Omega,\\
&u=0,\quad &\mbox{on}\, \partial\Omega,
\end{aligned}\right.
\end{equation}
where $0<s<1$ and $2_s^*=\frac{2N}{N-2s}$, the high order version:
\begin{equation}\label{high-pb01}
\left\{\begin{aligned}
&(-\Delta)^m u = \lambda u+|u|^{2_m^*-2}u, \quad &\mbox{in}\,\Omega,\\
&u=0,\quad &\mbox{on}\, \partial\Omega,
\end{aligned}\right.
\end{equation}
where $m\in\mathbb{N}$ with $m\geq2$ and $2_m^*=\frac{2N}{N-2m}$, the $p$-Laplacian version:
\begin{equation}\label{p-pb01}
\left\{\begin{aligned}
&-\Delta_p u = \lambda u+|u|^{2_p^*-2}u, \quad &\mbox{in}\,\Omega,\\
&u=0,\quad &\mbox{on}\, \partial\Omega,
\end{aligned}\right.
\end{equation}
where $\Delta_p u=\text{div}(|\nabla u|^{p-2}\nabla u)$ with $1<p<N$ and $2_p^*=\frac{pN}{N-p}$, and so on.  The studies on these Brezis-Nirenberg type problems also have a rich history which we do not want to review in this paper.  Thus, we also want to ask
\begin{enumerate}
\item[] {\bf Open question~13}:\quad Could the very recent results of the Brezis-Nirenberg problem~\eqref{pb01} be generalized to these Brezis-Nirenberg type problems?
\end{enumerate}

\subsection{Notations} Let
\begin{eqnarray*}
\langle u,v \rangle:=\into \nabla u\nabla v dx,\qquad ||u||:=\left(\into|\nabla u|^2 dx\right)^{\frac{1}{2}}
\end{eqnarray*}
be the inner product in $H_0^1(\Omega)$ and its corresponding norm.  Moreover, we define
\begin{eqnarray*}
||u||_{L^r}:=\left(\into |u|^rdx\right)^{\frac{1}{r}},\quad \langle u,v \rangle_{L^2}:=\into u v dx
\end{eqnarray*}
as the $L^r(\Omega)$-standard norm for any $r\in[1,+\infty]$.  Moreover, when $A \neq \Omega$ is any Lebesgue measurable subset of $\bbr^N$, we use the alternative notations $\|u\|_A$, $\|u\|_{L^r,A}$.

\section{Setting of the problem and the ansatz}
\subsection{Preliminaries} Let $(-\Delta)^{-1}:L^{\frac 32}(\Omega) \to H^1_0(\Omega)$ be the operator defined as the unique solution of the equation $-\Delta u =v$ in $\Omega$ and $u=0$ on $\partial\Omega$.  By the H\"older inequality it follows that $$\|(-\Delta)^{-1}(v)\|\leq C \|v\|_{L^{\frac 32}}, \quad \mbox{for all}\,\, v\in L^{\frac 32}(\Omega)$$ for some positive $C$ independent of $v$.  Hence we rewrite the problem \eqref{pb} as
\begin{equation}\label{pb1}
u=(-\Delta)^{-1}\left[f(u)+(\lambda_0+\ve) u\right],\quad u\in H^1_0(\Omega)\end{equation}
with $f(u)=|u|u$.  We denote by $G(x, y)$ the Green's function of the Laplace operator given by $$G(x, y)=\frac{1}{4\omega_6}\left(\frac{1}{|x-y|^4}-H(x, y)\right),$$ where $\omega_6$ denotes the surface area of the unit sphere in $\bbr^6$ and $H$ is the regular part of $G$, namely $$\Delta H(x, y)=0,\quad \mbox{in}\,\, \Omega,\qquad H(x, y)=\frac{1}{|x-y|^4},\quad\mbox{on}\,\, \partial\Omega.$$ In the previous section we have denoted by $\mathcal{W}_{\mu,\xi}$ the projection of the bubble $\mathcal{U}_{\mu,\xi}$ onto $H^1_0(\Omega)$. It is well known that the following expansion holds (see \cite{R})
\be\label{expW}
\mathcal{W}_{\mu,\xi}(x)=\mathcal{U}_{\mu,\xi}-\alpha_6\mu^2 H(x, \xi)+\mathcal O(\mu^4),\quad \mbox{as}\,\mu\to 0\ee
uniformly with respect to $\xi$ in compact sets of $\Omega$. Moreover (see \cite{BE}) it is also known that every solution to the linear equation
\begin{equation*}\label{linearpblim}
-\Delta \Psi_{\mu, \xi}=2\mathcal{U}_{\mu,\xi}\Psi_{\mu, \xi},\quad \mbox{in}\,\, \mathbb R^6\end{equation*}
is a linear combination of the functions $\Psi^i_{\mu, \xi}$ $(i=0, \ldots, 6)$ given by
\begin{equation*}\label{psi0}
\Psi^0_{\mu, \xi}(x)=\partial_{\mu} \mathcal{U}_{\mu,\xi}=2\alpha_6 \mu \frac{|x-\xi|^2-\mu^2}{(\mu^2+|x-\xi|^2)^3}\end{equation*}
and
\begin{equation*}\label{psii}
\Psi^i_{\mu, \xi}(x)=\partial_{\xi^i} \mathcal{U}_{\mu,\xi}=4\alpha_6 \mu^2 \frac{x^i-\xi^i}{(\mu^2+|x-\xi|^2)^3}, \quad i=1, \ldots, 6.\end{equation*}
We denote by $\mathcal{Z}_{\mu,\xi}^i$ the projection of $\Psi^i_{\mu, \xi}$ onto $H^1_0(\Omega)$, i.e.
\begin{equation*}\label{Zi}
-\Delta \mathcal{Z}_{\mu,\xi}^i=2 \mathcal{U}_{\mu,\xi}\Psi^i_{\mu, \xi},\quad \mbox{in}\,\, \Omega,\qquad \mathcal{Z}_{\mu,\xi}^i=0,\quad \mbox{on}\,\,\partial\Omega.\end{equation*}
Elliptic estimates give
\begin{equation*}\label{expZ0}
\mathcal{Z}_{\mu,\xi}^0(x)=\Psi^0_{\mu, \xi}-2\mu\alpha_6 H(x, \xi)+\mathcal O(\mu^3),\quad \mbox{as}\,\, \mu\to 0
\end{equation*}
and
\begin{equation*}\label{expZi}
\mathcal{Z}_{\mu,\xi}^i(x)=\Psi^i_{\mu, \xi}-\mu^2\alpha_6 \partial_{\xi^i}H(x, \xi)+\mathcal O(\mu^4),\quad \mbox{as}\,\, \mu\to 0,\,\, i=1, \ldots, 6
\end{equation*}
uniformly with respect to $\xi$ in compact sets of $\Omega$.

\subsection{The ansatz}
In what follows we look for a solution of \eqref{pb} of the form
\be\label{ansatz}
u_\lambda(x):=u_{\lambda_0}(x)+\ve v_{\lambda_0}(x)+\ve^2 w_{\lambda_0}(x) +\sum_{j=1}^k \beta_j \mathcal W_{\mu_j, \xi_j}(x)+\varphi_\lambda(x)\ee
where
\be\label{muj}
\mu_j=\tau_j \bar\mu\quad \mbox{with}\,\, \lim_{\lambda\to \lambda_0}\tau_j:=\tau_{j, 0}>0\,\, \mbox{being constants}\ee
and $\bar\mu>0$ with $\lim\limits_{\lambda\to\lambda_0}\bar\mu= 0$.  Moreover
\be\label{xij}
\xi_j=\left\{\begin{array}{ll} \xi_{j, 0},\quad & \mbox{if}\,\, \xi_{j, 0}\in\Omega_{u_{\lambda_0}, \pm\frac{\lambda_0}{2}}\setminus \Omega_{v_{\lambda_0}, \pm\frac1 2},\\
\xi_{j, 0}+\rho e_{j, l_j+1}, \quad & \mbox{if}\,\, \xi_{j, 0}\in\left(\Omega_{u_{\lambda_0}, \pm\frac{\lambda_0}{2}}\setminus \Omega_{u_{\lambda_0}, v_{\lambda_0}, \pm}\right)\cap \Omega_{v_{\lambda_0}, \pm \frac 12}.\\
\end{array}\right.\ee
We remark that $\mu_j=\mu_j(\lambda)$ and $\xi_j=\xi_j(\lambda)$.  Without loss of generality, we assume also that
\begin{equation*}\label{muj1}0<\mu_1\leq\mu_2\leq\cdots\leq\mu_k.\end{equation*}
Finally, $\varphi_\lambda$ is a remainder term which is small as $\lambda\to\lambda_0$ which belongs, as usual, to the space $\mathcal K^\bot_{\boldsymbol\mu, \boldsymbol\xi}$ defined as follows.  Let $$\mathcal K_{\boldsymbol\mu, \boldsymbol\xi}:={\rm span}\left\{\mathcal Z^i_{\mu_j, \xi_j}\,\,:\,\, j=1, \ldots , k, \,\, i=0, \ldots, 6\right\}$$ and
$$\mathcal K^\bot_{\boldsymbol\mu, \boldsymbol\xi}:=\left\{\varphi\in H^1_0(\Omega)\,\,:\,\, \langle \varphi, \mathcal Z^i_{\mu_j, \xi_j}\rangle =0\,\, i=0, \ldots, 6\,\, j=1, \ldots k\right\}.$$
Let $\Pi_{\boldsymbol\mu, \boldsymbol\xi}$ and $\Pi^\bot_{\boldsymbol\mu, \boldsymbol\xi}$ be the projections of $H^1_0(\Omega)$ onto $\mathcal K_{\boldsymbol\mu, \boldsymbol\xi}$ and $\mathcal K_{\boldsymbol\mu, \boldsymbol\xi}^\bot$ respectively.  Then, solving \eqref{pb1}, is equivalent to solve the system
\begin{numcases}{}
\Pi^\bot_{\boldsymbol\mu, \boldsymbol\xi}\left\{u_\lambda(x)-(-\Delta)^{-1}\left[f(u_\lambda)+\lambda u_\lambda\right]\right\}=0,\label{ort}\\
\Pi_{\boldsymbol\mu, \boldsymbol\xi}\left\{u_\lambda(x)-(-\Delta)^{-1}\left[f(u_\lambda)+\lambda u_\lambda\right]\right\}=0,\label{bif}
\end{numcases}
where $u_{\lambda}(x)$ is given by \eqref{ansatz}. In what follows, first we will solve \eqref{ort} to find the remainder term $\varphi_{\lambda}(x)$ and then we study the finite dimensional equation \eqref{bif}.
\section{The Nonlinear Projected problem}
Let us define \begin{equation}\label{sim}\mathcal V_{\boldsymbol\mu, \boldsymbol\xi}:=u_{\lambda_0}(x)+\ve v_{\lambda_0}(x)+\ve^2 w_{\lambda_0}(x) +\sum_{j=1}^k \beta_j \mathcal W_{\mu_j, \xi_j}(x)\end{equation} and \be\label{fix} z_{\lambda_0}(x):=u_{\lambda_0}(x)+\ve v_{\lambda_0}(x)+\ve^2 w_{\lambda_0}(x) . \ee
The equation \eqref{ort} can be rewritten as
\be\label{ort1}
\mathcal L_{\boldsymbol\mu, \boldsymbol\xi}(\varphi_\lambda)+\mathcal R_{\boldsymbol\mu, \boldsymbol\xi}+\mathcal N_{\boldsymbol\mu, \boldsymbol\xi}(\varphi_\lambda)=0\ee where
\be\label{lin}
\mathcal L_{\boldsymbol\mu, \boldsymbol\xi}(\varphi_\lambda):=\Pi^\bot_{\boldsymbol\mu, \boldsymbol\xi}\left\{\varphi_\lambda-(-\Delta)^{-1}\left[f'(\mathcal V_{\boldsymbol\mu, \boldsymbol\xi})\varphi_\lambda+\lambda\varphi_\lambda\right]\right\}\ee is the linearized operator at $\mathcal V_{\boldsymbol\mu, \boldsymbol\xi}$,
\be\label{error}
\mathcal R_{\boldsymbol\mu, \boldsymbol\xi}:=\Pi^\bot_{\boldsymbol\mu, \boldsymbol\xi}\left\{\mathcal V_{\boldsymbol\mu, \boldsymbol\xi}-(-\Delta)^{-1}\left[f(\mathcal V_{\boldsymbol\mu, \boldsymbol\xi})+\lambda\mathcal V_{\boldsymbol\mu, \boldsymbol\xi}\right]\right\}\ee
is the error term while
\begin{equation*}\label{quadratic}
\mathcal N_{\boldsymbol\mu, \boldsymbol\xi}(\varphi_\lambda):=\Pi^\bot_{\boldsymbol\mu, \boldsymbol\xi}\left\{-(-\Delta)^{-1}\left[f(\mathcal V_{\boldsymbol\mu, \boldsymbol\xi}+\varphi_\lambda)-f(\mathcal V_{\boldsymbol\mu, \boldsymbol\xi})-f'(\mathcal V_{\boldsymbol\mu, \boldsymbol\xi})\varphi_\lambda\right]\right\}\end{equation*}
is a quadratic form. To solve \eqref{ort1}, we need first to estimate the $H^1_0(\Omega)-$ norm of the error term $\mathcal R_{\boldsymbol\mu, \boldsymbol\xi}$ given by \eqref{error}.
\begin{lemma}\label{stimaerrore}
For any $(\xi_j)_{j=1, \ldots, k}$ satisfying \eqref{xij} and any $(\mu_j)_{j=1,\ldots, k}$ satisfying \eqref{muj} and for any $|\ve|$ sufficiently small it holds
\begin{equation*}\label{stimaR}
\|\mathcal R_{\boldsymbol\mu, \boldsymbol\xi}\|\lesssim \bar\mu^2|\ln \bar\mu|^{\frac 23}+|\ve|^3.\end{equation*}\end{lemma}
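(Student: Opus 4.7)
The plan is to reduce the $H^1_0$-bound to an $L^{3/2}$-bound for the raw error
$$G := -\Delta \mathcal{V}_{\boldsymbol\mu, \boldsymbol\xi} - \lambda \mathcal{V}_{\boldsymbol\mu, \boldsymbol\xi} - f(\mathcal{V}_{\boldsymbol\mu, \boldsymbol\xi}),$$
using $\|\Pi^\bot_{\boldsymbol\mu, \boldsymbol\xi}\|_{H^1_0\to H^1_0}\leq 1$ together with the Calder\'on--Zygmund type bound $\|(-\Delta)^{-1}g\|\lesssim \|g\|_{L^{3/2}}$ recalled at the start of Section~2. Hence it suffices to prove
$\|G\|_{L^{3/2}(\Omega)} \lesssim \bar\mu^2|\ln\bar\mu|^{2/3}+|\varepsilon|^3$.

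Writing $\mathcal{V}_{\boldsymbol\mu,\boldsymbol\xi}=z_{\lambda_0}+B$ with $B:=\sum_{j=1}^k \beta_j \mathcal{W}_{\mu_j,\xi_j}$, and invoking $-\Delta \mathcal{W}_{\mu_j,\xi_j}=\mathcal{U}_{\mu_j,\xi_j}^2$ together with the PDEs \eqref{pb}, \eqref{eqnPreWu0026}, \eqref{eqnPreWu0029}, I would decompose
$$G = \bigl[(-\Delta-\lambda)z_{\lambda_0}-f(z_{\lambda_0})\bigr] + \Bigl[\sum_j \beta_j \mathcal{U}_{\mu_j,\xi_j}^2-\lambda B - \bigl(f(z_{\lambda_0}+B)-f(z_{\lambda_0})\bigr)\Bigr].$$
The ansatz $z_{\lambda_0}=u_{\lambda_0}+\varepsilon v_{\lambda_0}+\varepsilon^2 w_{\lambda_0}$ is engineered so that, using the \emph{exact} identity $f(u+h)=|u|u+2|u|h+\text{sgn}(u)h^2$ whenever $\text{sgn}(u+h)=\text{sgn}(u)$, and the defining equations of $v_{\lambda_0}, w_{\lambda_0}$, the first bracket equals $-\varepsilon^3 w_{\lambda_0}-2\varepsilon^3\text{sgn}(u_{\lambda_0})v_{\lambda_0}w_{\lambda_0}-\varepsilon^4\text{sgn}(u_{\lambda_0})w_{\lambda_0}^2$ away from the nodal set of $u_{\lambda_0}$, plus a correction $-(\text{sgn}(z_{\lambda_0})-\text{sgn}(u_{\lambda_0}))z_{\lambda_0}^2$ supported in an $O(|\varepsilon|)$-neighborhood of $\{u_{\lambda_0}=0\}$. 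Both pieces contribute at most $O(|\varepsilon|^3)$ in $L^{3/2}(\Omega)$.

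For the second bracket, I split $\Omega=\Omega_{\text{out}}\cup\bigcup_j B_{\mu_j^{1/2}}(\xi_j)$. On $\Omega_{\text{out}}$, bubble decay gives $|B|\lesssim 1$ and in fact $|B|\ll |z_{\lambda_0}|$ outside a slightly larger scale, so $f(z_{\lambda_0}+B)-f(z_{\lambda_0})=2|z_{\lambda_0}|B+\text{sgn}(z_{\lambda_0})B^2$ exactly; each term combines with $\sum_j \beta_j \mathcal{U}_{\mu_j,\xi_j}^2-\lambda B$, the leading $L^{3/2}$ contribution being $\|\lambda B\|_{L^{3/2}}$, which by the direct computation $\|\mathcal{U}_{\mu,\xi}\|_{L^{3/2}(\Omega)}^{3/2}\sim \mu^3|\ln\mu|$ is bounded by $\bar\mu^2|\ln\bar\mu|^{2/3}$. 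Inside each ball $B_{\mu_j^{1/2}}(\xi_j)$, $\mathcal{W}_{\mu_j,\xi_j}>0$ dominates, so $\text{sgn}(\mathcal{V}_{\boldsymbol\mu,\boldsymbol\xi})=\beta_j$ and $f(\mathcal{V}_{\boldsymbol\mu,\boldsymbol\xi})=\beta_j(z_{\lambda_0}+\beta_j\mathcal{W}_{\mu_j,\xi_j})^2$ is explicit. Expanding yields $\beta_j(\mathcal{U}_{\mu_j,\xi_j}^2-\mathcal{W}_{\mu_j,\xi_j}^2)$, which by \eqref{expW} is $\lesssim \mu_j^2 H(x,\xi_j)\mathcal{U}_{\mu_j,\xi_j}$ and hence of order $\mu_j^4|\ln\mu_j|^{2/3}$ in $L^{3/2}$, plus the cross terms $-\lambda\beta_j\mathcal{W}_{\mu_j,\xi_j}-2 z_{\lambda_0}\mathcal{W}_{\mu_j,\xi_j}$ with $L^{3/2}$-norm $\lesssim \mu_j^2|\ln\mu_j|^{2/3}$ and the remaining $O(1)\cdot z_{\lambda_0}^2$ contribution, whose $L^{3/2}(B_{\mu_j^{1/2}})$ norm is $\lesssim \mu_j^2$ by H\"older.

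The principal technical obstacle is the transition zone $|x-\xi_j|\sim\mu_j^{1/2}$, where $\text{sgn}(\mathcal{V}_{\boldsymbol\mu,\boldsymbol\xi})$ switches and neither the Taylor identity nor the explicit formula is directly usable. The cure is the unifying identity
$$f(u+h)-\bigl(|u|u+2|u|h+\text{sgn}(u)h^2\bigr)=(\text{sgn}(u+h)-\text{sgn}(u))(u+h)^2$$
specialized to the non-smooth nonlinearity $f(u)=|u|u$ characteristic of $N=6$, applied with $u=z_{\lambda_0}$ and $h=\beta_j\mathcal{W}_{\mu_j,\xi_j}$: the extra contribution is supported where the two signs disagree, a set confined to scale $\mu_j^{1/2}$, making its $L^{3/2}$-norm absorbable into $\bar\mu^2|\ln\bar\mu|^{2/3}$. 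Finally, since $\xi_{1,0},\ldots,\xi_{k,0}$ are finitely many distinct points, interactions between different bubbles are of strictly lower order, and aggregation of the pointwise estimates delivers the claim.
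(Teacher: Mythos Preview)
Your proposal is correct and follows essentially the same route as the paper: both reduce $\|\mathcal R_{\boldsymbol\mu,\boldsymbol\xi}\|$ to an $L^{3/2}$ bound on the raw error, split it into a $z_{\lambda_0}$-piece (yielding the $|\varepsilon|^3$ contribution via the defining equations for $v_{\lambda_0},w_{\lambda_0}$) and a bubble-piece, and control the latter by decomposing $\Omega$ into small balls $B_{\sqrt{\mu_j}}(\xi_j)$ and their complement. The only difference is bookkeeping: you track signs through the exact identity $f(u+h)-\mathrm{sgn}(u)(u+h)^2=(\mathrm{sgn}(u+h)-\mathrm{sgn}(u))(u+h)^2$ and isolate a transition zone, whereas the paper bypasses any sign discussion by using the universal inequality $|f(a+b)-f(a)-f(b)|\le 2|a||b|$ for $f(t)=|t|t$, which gives the required pointwise bounds directly in both the inner and outer regions.
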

\begin{proof}
By the definition of $\mathcal{V}_{\pmb{\mu,\xi}}$, we have that
$$\begin{aligned}&-\Delta \mathcal V_{\boldsymbol\mu, \boldsymbol\xi}-(\lambda_0+\ve)\mathcal V_{\boldsymbol\mu, \boldsymbol\xi}-f(\mathcal V_{\boldsymbol\mu, \boldsymbol\xi})=-f(\mathcal V_{\boldsymbol\mu, \boldsymbol\xi})+f(u_{\lambda_0})+\sum_{j=1}^k \beta_j \mathcal U_{\mu_j, \xi_j}^2\\
&\qquad-(\lambda_0+\ve) \sum_{j=1}^k \beta_j \mathcal W_{\mu_j, \xi_j}+\ve^2 f'(u_{\lambda_0}) w_{\lambda_0}+\ve^2{\rm sgn}(u_{\lambda_0}(x))v_{\lambda_0}^2-\ve^3 w_{\lambda_0}.\end{aligned}$$
By continuity of $\Pi^\bot_{\boldsymbol\mu, \boldsymbol\xi}$, \eqref{sim} and \eqref{error}, we get that
$$\begin{aligned}\|\mathcal R_{\boldsymbol\mu, \boldsymbol\xi}\|\lesssim& \|-\Delta \mathcal V_{\boldsymbol\mu, \boldsymbol\xi}-(\lambda_0+\ve)\mathcal V_{\boldsymbol\mu, \boldsymbol\xi}-f(\mathcal V_{\boldsymbol\mu, \boldsymbol\xi})\|_{L^{\frac 32}}\\
\lesssim& \underbrace{\|-f(\mathcal V_{\boldsymbol\mu, \boldsymbol\xi})+\sum_{j=1}^k \beta_j \mathcal W_{\mu_j, \xi_j}^2+f(z_{\lambda_0})\|_{L^{\frac 32}}}_{\|(I)\|_{L^{\frac 32}}}
\\&+\underbrace{\|-f(z_{\lambda_0})+f(u_{\lambda_0})+\ve^2{\rm sgn}(u_{\lambda_0}(x))v_{\lambda_0}^2+\ve f'(u_{\lambda_0})v_{\lambda_0}+\ve^2 f'(u_{\lambda_0})w_{\lambda_0}\|_{L^{\frac32}}}_{\|(II)\|_{L^{\frac32}}}\\
&+\|\sum_{j=1}^k \beta_j (\mathcal U^2_{\mu_j, \xi_j}-\mathcal W_{\mu_j, \xi_j}^2)\|_{L^{\frac 32}}+(\lambda_0+|\ve|)\sum_{j=1}^k \|\mathcal W_{\mu_j, \xi_j}\|_{L^{\frac 32}}+\underbrace{|\ve|^3\|w_{\lambda_0}\|_{L^{\frac 32}}}_{:=\mathcal O(|\ve|^3)}.\end{aligned}$$

It follows from \eqref{talenti}, \eqref{expW} and \eqref{muj} that
$$\|\mathcal W_{\mu_j, \xi_j}\|_{L^{\frac 32}}\lesssim \|\mathcal U_{\mu_j, \xi_j}\|_{L^{\frac 32}}\lesssim \bar\mu^2|\ln\bar\mu|^{\frac 23}$$ and
$$\begin{aligned}\|\mathcal W^2_{\mu_j, \xi_j}-\mathcal U^2_{\mu_j, \xi_j}\|_{L^{\frac 32}}&\lesssim \left(\int_\Omega \underbrace{|\mathcal W_{\mu_j, \xi_j}-\mathcal U_{\mu_j, \xi_j}|^{\frac 32}}_{:=\mathcal O(\bar\mu^3)}|\mathcal W_{\mu_j, \xi_j}+\mathcal U_{\mu_j, \xi_j}|^{\frac 3 2}\,dx\right)^{\frac 23}\\ &\lesssim \bar\mu^2 \|\mathcal U_{\mu_j, \xi_j}\|_{L^{\frac 32}}=o\left(\bar\mu^2|\ln\bar\mu|^{\frac 23}\right).\end{aligned}$$
To estimate $\|(I)\|_{L^{\frac 32}}$, we need to divide the whole domain $\Omega$ into two parts. The first part is in a small neighborhood of every point $\xi_h$ to be $B_{\sqrt \mu_h}(\xi_h)$. In every $B_{\sqrt \xi_h} (\xi_h)$, we rewrite
\begin{align*}
  f\left(\mathcal{V}_{\pmb{\mu,\xi}} \right) -\sum_{j=1}^{k}\beta_j\mathcal{W}_{\mu_j,\xi_j}^2-f(z_{\lambda_0})
=& f\left(\beta_h\Wh +\sum_{j\neq h}\beta_j \Wj +z_{\lambda_0}\right)\\
& -\beta_h \Wh^2 -\sum_{j\neq h}\beta_j \Wj^2-f(z_{\lambda_0}),
\end{align*}
which, together with Taylor expansion, implies that
$$\begin{aligned}&\left|
f\left(\beta_h\Wh +\sum_{j\neq h}\beta_j \Wj +z_{\lambda_0}\right) -\beta_h \Wh^2 -\sum_{j\neq h}\beta_j \Wj^2-f(z_{\lambda_0})\right|\\
\lesssim\quad &\mathcal{W}_{\mu_h,\xi_h}\left(\sum_{j\neq h}\mathcal{W}_{\mu_j,\xi_j}+|z_{\lambda_0}|\right)+\sum_{j\neq h}\mathcal{W}_{\mu_j,\xi_j}^2+|f(z_{\lambda_0})|.
\end{aligned}$$
In the remaining region $\Omega\setminus\bigcup_{h=1}^k B_{\sqrt\mu_h}(\xi_h)$, we rewrite
$$\begin{aligned}
 f\left(\mathcal{V}_{\pmb{\mu,\xi}}\right)-f(z_{\lambda_0})-\sum_{j=1}^{k}\beta_j\mathcal{W}_{\mu_j,\xi_j}^2
=  f\left(z_{\lambda_0}+\sum_{j=1}^{k}\beta_j\Wj\right)-f(z_{\lambda_0})-\sum_{j=1}^{k}\beta_j\mathcal{W}_{\mu_j,\xi_j}^2,
\end{aligned}$$
which, together with Taylor expansion again, implies that
$$\begin{aligned}
\left|  f\left(z_{\lambda_0}+\sum_{j=1}^{k}\beta_j\Wj\right)-f(z_{\lambda_0})-\sum_{j=1}^{k}\beta_j\mathcal{W}_{\mu_j,\xi_j}^2\right|
\lesssim \sum_{j=1}^{k} \left(\mathcal{W}_{\mu_j,\xi_j}+\mathcal{W}_{\mu_j,\xi_j}^2\right).
\end{aligned}$$
Now, by direct computations with \eqref{talenti}, \eqref{expW}, \eqref{muj} and \eqref{xij}, we get that
$$\|(I)\|_{L^{\frac 32}}\lesssim\sum_{h=1}^{k}\|(I)\|_{L^{\frac 32},B_{\sqrt\mu_h}(\xi_h)}+\|(I)\|_{L^{\frac 32},\Omega\setminus\bigcup_{h=1}^k B_{\sqrt\mu_h}(\xi_h)}\lesssim \bar\mu^2|\ln\bar\mu|^{\frac 23}. $$
For what concerning $\|(II)\|_{L^{\frac32}}$, we notice that
$$
 f(z_{\lambda_0})-f(u_{\lambda_0})-\ve f'(u_{\lambda_0}) v_{\lambda_0} -\ve^2f'(u_{\lambda_0})w_{\lambda_0}-\text{sgn}(u_{\lambda_0})\ve^2 v_{\lambda_0}^2=\mathcal{O}(|\ve|^3),
$$
Then $\|(II)\|_{L^{\frac 32}}\lesssim |\ve|^3$ and the thesis follows.
\end{proof}
Then, we deal with the invertibility of the linearized operator $\mathcal L_{\boldsymbol\mu, \boldsymbol\xi}$ defined in \eqref{lin}. Since the proof is almost standard, we omit it (see \cite{V} Lemma 2.4 or \cite{RV} Lemma 4.2).
\begin{lemma}\label{invertibility}
For any $(\xi_j)_{j=1, \ldots, k}$ satisfying \eqref{xij} and any $(\mu_j)_{j=1,\ldots, k}$ satisfying \eqref{muj} there exists $C>0$ such that  for any $|\ve|$ sufficiently small it holds
\begin{equation*}
\|\mathcal L_{\boldsymbol\mu, \boldsymbol\xi}(\varphi_\lambda)\|\geq C \|\varphi_\lambda\|,\quad \forall\,\,\varphi_\lambda\in \mathcal K_{\boldsymbol\mu, \boldsymbol\xi}^\bot.\end{equation*}
In particular, $\mathcal L_{\boldsymbol\mu, \boldsymbol\xi}$ is invertible with inverse which is uniformly bounded.
\end{lemma}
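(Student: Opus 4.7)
The plan is to argue by contradiction in the standard Lyapunov--Schmidt style: assume there exist sequences $\ve_n \to 0$, parameters $(\mu_{j,n}, \xi_{j,n})$ satisfying \eqref{muj}--\eqref{xij}, and functions $\varphi_n \in \mathcal{K}^\bot_{\boldsymbol\mu_n, \boldsymbol\xi_n}$ with $\|\varphi_n\| = 1$ but $\|\mathcal{L}_{\boldsymbol\mu_n, \boldsymbol\xi_n}(\varphi_n)\| \to 0$. Writing $\zeta_n := \mathcal{L}_{\boldsymbol\mu_n, \boldsymbol\xi_n}(\varphi_n)$, the definition \eqref{lin} yields Lagrange multipliers $c^n_{j,i}$ such that
\begin{equation*}
-\Delta \varphi_n - \bigl(\lambda_n + f'(\mathcal{V}_{\boldsymbol\mu_n, \boldsymbol\xi_n})\bigr)\varphi_n \;=\; -\Delta \zeta_n + \sum_{j=1}^{k}\sum_{i=0}^{6} c^n_{j,i}\,(-\Delta)\mathcal{Z}^i_{\mu_{j,n},\xi_{j,n}}.
\end{equation*}
Testing this identity against each $\mathcal{Z}^l_{\mu_{h,n},\xi_{h,n}}$, using the almost-orthogonality between the $\mathcal{Z}^i_{\mu_{j,n},\xi_{j,n}}$ for distinct $(j,i)$ and the fact that $\|\mathcal{Z}^i_{\mu_{j,n},\xi_{j,n}}\|$ are uniformly bounded, one obtains a nearly diagonal linear system for $c^n_{j,i}$, showing that each $c^n_{j,i} \to 0$.

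Next, I would analyze the weak limit of $\varphi_n$ at two scales. At the macroscopic scale, $\varphi_n \rightharpoonup \varphi_0$ in $H^1_0(\Omega)$ along a subsequence; since $\mathcal{W}_{\mu_{j,n},\xi_{j,n}} \to 0$ weakly and $\mathcal{V}_{\boldsymbol\mu_n,\boldsymbol\xi_n} \to u_{\lambda_0}$ strongly in $L^p_{loc}(\Omega\setminus \{\xi_{j,0}\})$ for $p<\infty$, the limit $\varphi_0 \in H^1_0(\Omega)$ satisfies $-\Delta \varphi_0 = (\lambda_0 + 2|u_{\lambda_0}|)\varphi_0$ in $\Omega$. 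Since $\Omega$ is a non-degenerate domain of \eqref{pb}, the kernel $\mathcal{K}_{\lambda_0}$ is trivial, so $\varphi_0 \equiv 0$. At the microscopic scale around each $\xi_{j,0}$, define the rescalings $\widetilde\varphi_{j,n}(y) := \mu_{j,n}^{2}\,\varphi_n(\mu_{j,n}\, y + \xi_{j,n})$, which are bounded in $D^{1,2}(\mathbb{R}^6)$. Any weak limit $\widetilde\varphi_{j,\infty}$ solves the linearized bubble equation $-\Delta \widetilde\varphi_{j,\infty} = 2\mathcal{U}_{1,0}\widetilde\varphi_{j,\infty}$ on $\mathbb{R}^6$. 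The orthogonality $\langle \varphi_n, \mathcal{Z}^i_{\mu_{j,n},\xi_{j,n}}\rangle = 0$, combined with the expansions of $\mathcal{Z}^i_{\mu,\xi}$ recalled before Lemma~\ref{stimaerrore}, passes to the limit and forces $\widetilde\varphi_{j,\infty}$ to be orthogonal to every $\Psi^i_{1,0}$ in the sense of $D^{1,2}$; by the classification of the kernel recalled in the excerpt, $\widetilde\varphi_{j,\infty} \equiv 0$.

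Finally, I would extract the contradiction from the energy identity obtained by testing the equation against $\varphi_n$:
\begin{equation*}
1 = \|\varphi_n\|^2 = \int_\Omega \bigl(\lambda_n + f'(\mathcal{V}_{\boldsymbol\mu_n, \boldsymbol\xi_n})\bigr)\varphi_n^2\,dx + \langle \zeta_n, \varphi_n\rangle + \sum_{j,i} c^n_{j,i}\langle \mathcal{Z}^i_{\mu_{j,n},\xi_{j,n}}, \varphi_n\rangle.
\end{equation*}
The last two pieces are $o(1)$ by the previous step and by $\zeta_n \to 0$. Splitting the integral into the region $\Omega\setminus \bigcup_j B_{R\mu_{j,n}}(\xi_{j,n})$ and small balls around each $\xi_{j,n}$, on the complement the integrand converges to $(\lambda_0 + 2|u_{\lambda_0}|)\varphi_0^2 = 0$ by compactness and the first weak-limit step, while on each small ball the change of variables $x = \mu_{j,n}y + \xi_{j,n}$ reduces the integral to $\int 2\mathcal{U}_{1,0}\widetilde\varphi_{j,n}^2$ plus lower-order contributions from $u_{\lambda_0}$, $v_{\lambda_0}$, $w_{\lambda_0}$ and the other bubbles (controlled exactly as in Lemma~\ref{stimaerrore} by interaction estimates like $\bar\mu^2|\ln\bar\mu|^{2/3}$), and the second weak-limit step sends these to zero. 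Thus the right-hand side is $o(1)$, contradicting $\|\varphi_n\|=1$.

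The main technical obstacle is the microscopic step when several bubbles coexist: one must quantify the orthogonality to all $\mathcal{Z}^i_{\mu_{j,n},\xi_{j,n}}$ uniformly in $j$, and control the interaction between distinct bubbles together with the fixed profile $z_{\lambda_0}$ in the nonlinear coefficient $f'(\mathcal{V}_{\boldsymbol\mu_n,\boldsymbol\xi_n}) = 2|\mathcal{V}_{\boldsymbol\mu_n,\boldsymbol\xi_n}|$; the hypothesis $\liminf \max\mu_{j,n}/\min\mu_{j,n} > 0$ and the distinctness of the concentration points $\xi_{j,0}$ are precisely what makes these interaction terms negligible. Once invertibility on $\mathcal{K}^\bot_{\boldsymbol\mu,\boldsymbol\xi}$ with a uniform lower bound is established, the uniform bound on the inverse follows immediately from the open mapping theorem applied on the closed subspace $\mathcal{K}^\bot_{\boldsymbol\mu,\boldsymbol\xi}$.
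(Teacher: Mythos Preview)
Your proposal is correct and follows exactly the standard contradiction/blow-up argument that the paper has in mind: the paper omits the proof entirely and refers to \cite{V}, Lemma~2.4 and \cite{RV}, Lemma~4.2, both of which proceed by the same scheme (Lagrange multipliers vanish, macroscopic weak limit vanishes by non-degeneracy of $u_{\lambda_0}$, microscopic rescaled limits vanish by the kernel classification and orthogonality, contradiction from the energy identity). Your identification of the non-degeneracy assumption on $\Omega$ as the ingredient killing the global limit, and of the comparable bubble scales plus separated concentration points as what controls the interaction terms, is precisely the content of those referenced proofs.
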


With Lemmas~\ref{stimaerrore} and \ref{invertibility} in hands, we are in position now to find a solution of the equation \eqref{ort} whose proof relies on a standard contraction mapping argument (see for example \cite{MP}, Proposition 1.8 and \cite{MPVe}, Proposition 2.1).
\begin{proposition}\label{existencevarphi}
For any $(\xi_j)_{j=1, \ldots, k}$ satisfying \eqref{xij} and any $(\mu_j)_{j=1,\ldots, k}$ satisfying \eqref{muj} there exists $C>0$ such that for any $|\ve|$ sufficiently small there exists a unique $\varphi_\lambda\in \mathcal K_{\boldsymbol\mu, \boldsymbol\xi}^\bot$ solution of \eqref{ort} which is continuously differentiable with respect to $\mu_j$ and $\xi_j$. Moreover
\be\label{estimatevarphi}
\|\varphi_\lambda\|\leq C \left(\bar\mu^2|\ln\bar\mu|^{\frac 23}+|\ve|^3\right).\ee

\end{proposition}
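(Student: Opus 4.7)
The plan is to cast \eqref{ort1} as a fixed point problem and apply the Banach contraction theorem on a small ball in $\mathcal{K}^\bot_{\boldsymbol{\mu},\boldsymbol{\xi}}$. By Lemma~\ref{invertibility}, $\mathcal{L}_{\boldsymbol{\mu},\boldsymbol{\xi}}$ is invertible on $\mathcal{K}^\bot_{\boldsymbol{\mu},\boldsymbol{\xi}}$ with a uniformly bounded inverse, so \eqref{ort1} is equivalent to the fixed point equation
\begin{equation*}
\varphi_\lambda=\mathcal{T}_{\boldsymbol{\mu},\boldsymbol{\xi}}(\varphi_\lambda):=-\mathcal{L}_{\boldsymbol{\mu},\boldsymbol{\xi}}^{-1}\bigl(\mathcal{R}_{\boldsymbol{\mu},\boldsymbol{\xi}}+\mathcal{N}_{\boldsymbol{\mu},\boldsymbol{\xi}}(\varphi_\lambda)\bigr).
\end{equation*}
Setting $\rho:=K(\bar\mu^2|\ln\bar\mu|^{\frac{2}{3}}+|\ve|^3)$ for a sufficiently large constant $K$, I will show that $\mathcal{T}_{\boldsymbol{\mu},\boldsymbol{\xi}}$ maps the closed ball $B_\rho\subset\mathcal{K}^\bot_{\boldsymbol{\mu},\boldsymbol{\xi}}$ into itself and is a contraction there, provided $|\ve|$ is small.

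The essential input beyond what has already been proved is a quadratic estimate on $\mathcal{N}_{\boldsymbol{\mu},\boldsymbol{\xi}}$. Since $f(u)=|u|u$ has $f'(u)=2|u|$ Lipschitz in $u$, a Taylor-type expansion gives pointwise $|f(\mathcal{V}_{\boldsymbol{\mu},\boldsymbol{\xi}}+\varphi)-f(\mathcal{V}_{\boldsymbol{\mu},\boldsymbol{\xi}})-f'(\mathcal{V}_{\boldsymbol{\mu},\boldsymbol{\xi}})\varphi|\lesssim |\varphi|^2$. Combined with the boundedness of $\Pi^\bot_{\boldsymbol{\mu},\boldsymbol{\xi}}$ and of $(-\Delta)^{-1}:L^{\frac{3}{2}}\to H^1_0(\Omega)$, and with the Sobolev embedding $H^1_0(\Omega)\hookrightarrow L^3(\Omega)$ in dimension six, one obtains
\begin{equation*}
\|\mathcal{N}_{\boldsymbol{\mu},\boldsymbol{\xi}}(\varphi)\|\lesssim \|\varphi\|^2,\qquad \|\mathcal{N}_{\boldsymbol{\mu},\boldsymbol{\xi}}(\varphi_1)-\mathcal{N}_{\boldsymbol{\mu},\boldsymbol{\xi}}(\varphi_2)\|\lesssim (\|\varphi_1\|+\|\varphi_2\|)\|\varphi_1-\varphi_2\|.
\end{equation*}
Together with Lemma~\ref{stimaerrore}, these yield $\|\mathcal{T}_{\boldsymbol{\mu},\boldsymbol{\xi}}(\varphi)\|\leq C_1(\bar\mu^2|\ln\bar\mu|^{\frac{2}{3}}+|\ve|^3)+C_2\|\varphi\|^2$, so taking $K=2C_1$ and $|\ve|$ small enough that $C_2\rho\leq \frac{1}{2}$ makes $\mathcal{T}_{\boldsymbol{\mu},\boldsymbol{\xi}}$ a self-map of $B_\rho$; the Lipschitz bound on $\mathcal{N}$ then gives the contraction property with constant $\leq C_2(\|\varphi_1\|+\|\varphi_2\|)\leq \frac{1}{2}$. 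The Banach fixed point theorem provides the unique solution $\varphi_\lambda\in B_\rho$ and simultaneously the estimate \eqref{estimatevarphi}.

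For the continuous differentiability with respect to $\mu_j$ and $\xi_j$, I will apply the implicit function theorem to the map
\begin{equation*}
F(\varphi,\boldsymbol{\mu},\boldsymbol{\xi}):=\varphi+\mathcal{L}_{\boldsymbol{\mu},\boldsymbol{\xi}}^{-1}\bigl(\mathcal{R}_{\boldsymbol{\mu},\boldsymbol{\xi}}+\mathcal{N}_{\boldsymbol{\mu},\boldsymbol{\xi}}(\varphi)\bigr)
\end{equation*}
at the point $(\varphi_\lambda,\boldsymbol{\mu},\boldsymbol{\xi})$. The partial derivative $\partial_\varphi F$ equals $I+\mathcal{L}^{-1}_{\boldsymbol{\mu},\boldsymbol{\xi}}\circ D\mathcal{N}_{\boldsymbol{\mu},\boldsymbol{\xi}}(\varphi_\lambda)$; since $\|D\mathcal{N}_{\boldsymbol{\mu},\boldsymbol{\xi}}(\varphi_\lambda)\|\lesssim \|\varphi_\lambda\|\to 0$, this operator is a small perturbation of the identity, hence invertible. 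The smooth dependence of $\mathcal{R}_{\boldsymbol{\mu},\boldsymbol{\xi}}$, $\mathcal{N}_{\boldsymbol{\mu},\boldsymbol{\xi}}$ and of the projections on the parameters (inherited from the smooth dependence of $\mathcal{W}_{\mu_j,\xi_j}$ and $\mathcal{Z}^i_{\mu_j,\xi_j}$) makes $F$ of class $C^1$, and the implicit function theorem yields the claimed regularity.

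The main obstacle I expect is purely technical: carefully carrying out the pointwise $|\varphi|^2$ bound for $\mathcal{N}_{\boldsymbol{\mu},\boldsymbol{\xi}}$ in the correct function-space chain, so that the factor of $\bar\mu$ is not lost and Lemma~\ref{invertibility}'s uniform bound combines cleanly with Lemma~\ref{stimaerrore} to close the contraction estimate. Everything else — tracking the Projector $\Pi^\bot_{\boldsymbol{\mu},\boldsymbol{\xi}}$ when differentiating in $(\mu_j,\xi_j)$, verifying smoothness of the parameter dependence through $\mathcal{Z}^i_{\mu_j,\xi_j}$ — is standard and follows the pattern of \cite{V,RV,MP,MPVe}.
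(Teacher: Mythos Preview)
Your proposal is correct and follows exactly the standard contraction mapping argument that the paper invokes by reference to \cite{MP, MPVe} without spelling out: rewrite \eqref{ort1} as a fixed point equation via $\mathcal L_{\boldsymbol\mu,\boldsymbol\xi}^{-1}$, use the quadratic bound $|f(\mathcal V+\varphi)-f(\mathcal V)-f'(\mathcal V)\varphi|\lesssim|\varphi|^2$ (valid since $f''$ is bounded for $f(u)=|u|u$) together with Lemmas~\ref{stimaerrore} and \ref{invertibility} to close the contraction on a ball of radius $\sim\bar\mu^2|\ln\bar\mu|^{2/3}+|\ve|^3$, and then deduce $C^1$ dependence on the parameters via the implicit function theorem. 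Your sketch is in fact more detailed than what the paper provides.
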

\section{ The finite dimensional problem}
Let now $\varphi_\lambda$ be the solution found in Proposition \ref{existencevarphi}. To solve \eqref{bif}, we have to find the parameters $\pmb\mu(\ve)$ and $ \pmb\xi(\ve)$ satisfying \eqref{muj} and \eqref{xij}  such that $\varphi_\lambda$ satisfies the equation \eqref{bif}. It is well known that the reduced problem has a variational structure, that is, the solutions of \eqref{pb} are critical points of the $C^2-$ functional $J_\lambda:H^1_0(\Omega) \to \mathbb R$ defined as \begin{equation*}\label{functional}
 J_\lambda(u):=\frac 12 \int_\Omega |\nabla u|^2\, dx-\frac{\lambda}{2}\int_\Omega u^2\, dx -\int_\Omega F(u)\, dx\end{equation*} where $F(s):=\int_0^s f(t)\, dt$.  Now let $\widetilde J_\lambda: \mathbb{R}_+^k\times \mathbb{R}^6\to \mathbb{R}$ be the reduced energy  given by
\begin{equation*}\label{re}
 \widetilde J_\lambda({\boldsymbol\mu, \boldsymbol\xi})=J_\lambda(\mathcal V_{\boldsymbol\mu, \boldsymbol\xi}+\varphi_\lambda).\end{equation*}
We shall give explicit asymptotic expressions for the reduced energy.
\begin{proposition}\label{reduced}
\begin{itemize}
\item[(i)] If $({\boldsymbol\mu, \boldsymbol\xi})$ satisfying \eqref{muj} and \eqref{xij} is a critical point of the reduced energy $\widetilde J_\lambda({\boldsymbol\mu, \boldsymbol\xi})$ then $\mathcal V_{\boldsymbol\mu, \boldsymbol\xi}+\varphi_\lambda$ is a solution of \eqref{pb}.
\item[(ii)] There exists $\ve_0>0$ such that for every $\ve\in (-\ve_0, \ve_0)$
\begin{equation}\label{re1}
\widetilde J_\lambda({\boldsymbol\mu, \boldsymbol\xi}):=\mathfrak c_0(\ve)-\sum_{j=1}^k\mathcal{E}_{\lambda,j}(\xi_j,\mu_j)+\mathcal{O}(|\ve|^2\bar{\mu}^2+|\ve|^6+\bar{\mu}^{\frac 72}+\bar{\mu}^3\left|\xi_j-\xi_{j,0}\right|^2)
\end{equation}
where
\begin{eqnarray}\label{re2}
\mathcal{E}_{\lambda,j}(\xi_j,\mu_j)=d_1\left(\frac{\lambda_0}{2}+\beta_ju_{\lambda_0}(\xi_j)\right)\mu_j^2+\ve d_1\left(\frac{1}{2}+\beta_jv_{\lambda_0}(\xi_j)\right)\mu_j^2
+\frac{11}{9}d_2\mu_j^3
\end{eqnarray}
with $d_1=\left\|\mathcal{U}\right\|_{L^2,\mathbb{R}^6}^2$ and $d_2=\alpha_6^{\frac 32}\omega_6\left|u_{\lambda_0}(\xi_{j,0})\right|^{\frac{3}{2}}$.
\end{itemize}
\end{proposition}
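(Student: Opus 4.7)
Part (i) is the standard variational Lyapunov--Schmidt argument. Since $\varphi_\lambda\in\mathcal K^\bot_{\boldsymbol\mu,\boldsymbol\xi}$ solves \eqref{ort1}, one has $J_\lambda'(\mathcal V_{\boldsymbol\mu,\boldsymbol\xi}+\varphi_\lambda)=\sum_{j,i}c_{j,i}\,\mathcal Z^i_{\mu_j,\xi_j}$ in $H^{-1}(\Omega)$ for some scalars $c_{j,i}$. Differentiating $\widetilde J_\lambda(\boldsymbol\mu,\boldsymbol\xi)$ with respect to each of $\mu_j$ and $\xi^i_j$ and using the chain rule together with the orthogonality $\langle\varphi_\lambda,\mathcal Z^{i'}_{\mu_{j'},\xi_{j'}}\rangle=0$ (and its parameter derivatives, controlled by \eqref{estimatevarphi}) yields a $7k\times 7k$ linear system for the $c_{j,i}$'s whose leading-order coefficient matrix is the pairing of $\{\partial_{\mu_j}\mathcal V,\partial_{\xi^i_j}\mathcal V\}$ against $\{\mathcal Z^{i'}_{\mu_{j'},\xi_{j'}}\}$. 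The expansions of $\mathcal W_{\mu,\xi}$ and $\mathcal Z^i_{\mu,\xi}$ recorded in the Preliminaries make this matrix diagonally dominant, hence invertible for $|\ve|$ small; therefore all $c_{j,i}$ vanish and $\mathcal V_{\boldsymbol\mu,\boldsymbol\xi}+\varphi_\lambda$ solves \eqref{pb}.

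For part (ii), a second-order Taylor expansion of $J_\lambda$ about $\mathcal V_{\boldsymbol\mu,\boldsymbol\xi}$ combined with Lemma \ref{stimaerrore} and \eqref{estimatevarphi} gives $\widetilde J_\lambda(\boldsymbol\mu,\boldsymbol\xi)=J_\lambda(\mathcal V_{\boldsymbol\mu,\boldsymbol\xi})+\mathcal O(\|\mathcal R_{\boldsymbol\mu,\boldsymbol\xi}\|\|\varphi_\lambda\|+\|\varphi_\lambda\|^2)$, which is absorbed in the error of \eqref{re1}. It thus suffices to expand $J_\lambda(\mathcal V)$ algebraically with $\mathcal V=z_{\lambda_0}+\sum_j\beta_j\mathcal W_{\mu_j,\xi_j}$. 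The critical term splits as $-\tfrac13\int|\mathcal V|^3=-\tfrac13\int|z_{\lambda_0}|^3-\tfrac13\sum_j\int|\mathcal W_{\mu_j,\xi_j}|^3-\int\mathrm{Int}\,dx$, where $\mathrm{Int}:=F(\mathcal V)-F(z_{\lambda_0})-\sum_jF(\mathcal W_{\mu_j,\xi_j})$ with $F(u)=\tfrac13|u|^3$. All $(\boldsymbol\mu,\boldsymbol\xi)$-independent pieces---namely $J_\lambda(z_{\lambda_0})$ and the $\mu_j$-invariant parts of $\tfrac12\int|\nabla\mathcal W_{\mu_j,\xi_j}|^2$ and $\int|\mathcal W_{\mu_j,\xi_j}|^3$, both scale-invariant at the critical exponent $2^*=3$ for $N=6$ up to $\mathcal O(\bar\mu^4)$ Robin corrections---go into $\mathfrak c_0(\ve)$; bubble--bubble cross terms of order $\bar\mu^4/|\xi_j-\xi_h|^4$ sit in the error.

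For the remaining $(\boldsymbol\mu,\boldsymbol\xi)$-dependent quadratic contributions, I combine $-\Delta\mathcal W_{\mu_j,\xi_j}=\mathcal U_{\mu_j,\xi_j}^2$ with the identity $-\Delta z_{\lambda_0}-\lambda z_{\lambda_0}=f(z_{\lambda_0})+\mathcal O(|\ve|^3)$ established inside the proof of Lemma \ref{stimaerrore} to get
\[
\beta_j\int\nabla z_{\lambda_0}\cdot\nabla\mathcal W_{\mu_j,\xi_j}\,dx-\lambda\beta_j\int z_{\lambda_0}\mathcal W_{\mu_j,\xi_j}\,dx=\beta_j\int f(z_{\lambda_0})\mathcal W_{\mu_j,\xi_j}\,dx+\mathcal O(|\ve|^3).
\]
Expanding $f(z_{\lambda_0})=f(u_{\lambda_0})+2\ve|u_{\lambda_0}|v_{\lambda_0}+\mathcal O(\ve^2)$, Taylor-expanding $\mathrm{Int}$ classically in the outer region $\Omega\setminus\bigcup_jB_{\sqrt{\mu_j}}(\xi_j)$, and repeatedly applying the concentration estimate
\[
\int\phi\,\mathcal U_{\mu_j,\xi_j}^2\,dx=d_1\mu_j^2\phi(\xi_j)+\mathcal O(\bar\mu^{7/2}+\bar\mu^3|\xi_j-\xi_{j,0}|^2),\qquad\phi\in C^1(\overline\Omega),
\]
together with the equations for $u_{\lambda_0}$ and $v_{\lambda_0}$, one collects the $\mu_j^2$- and $\ve\mu_j^2$-contributions. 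Added to the self-energy $L^2$-piece $-\tfrac{\lambda}{2}\|\mathcal W_{\mu_j,\xi_j}\|^2_{L^2}=-\tfrac{\lambda_0}{2}d_1\mu_j^2-\tfrac{\ve}{2}d_1\mu_j^2+\mathcal O(\bar\mu^4)$, they reproduce exactly the pieces $-d_1(\lambda_0/2+\beta_j u_{\lambda_0}(\xi_j))\mu_j^2-\ve d_1(\tfrac12+\beta_j v_{\lambda_0}(\xi_j))\mu_j^2$ of $-\mathcal E_{\lambda,j}$.

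The main obstacle is the $\mu_j^3$-order contribution from $\int\mathrm{Int}\,dx$ inside the inner balls, which produces $-\tfrac{11}{9}d_2\mu_j^3$ with $d_2=\alpha_6^{3/2}\omega_6|u_{\lambda_0}(\xi_{j,0})|^{3/2}$. Because $\beta_j=-\mathrm{sgn}(u_{\lambda_0}(\xi_j))$, the bubble $\beta_j\mathcal W_{\mu_j,\xi_j}$ and the background $z_{\lambda_0}$ carry opposite signs near $\xi_j$, so $\mathcal V$ has an approximate nodal sphere at the matching radius $r_\star\simeq(\alpha_6/|u_{\lambda_0}(\xi_j)|)^{1/4}\mu_j^{1/2}$ where $\mathcal W_{\mu_j,\xi_j}\approx|u_{\lambda_0}(\xi_j)|$; naive Taylor expansion of $F(u)=\tfrac13|u|^3$ across this layer is illegitimate, and the layer itself is what generates the cubic-in-$\mu_j$ term. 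I would treat $B_{\sqrt{\mu_j}}(\xi_j)$ by rescaling $x=\xi_j+\mu_j y$, so that $\mathcal V(x)=\mu_j^{-2}[\beta_j\mathcal U_{1,0}(y)+\mu_j^2 u_{\lambda_0}(\xi_j)+\mathcal O(\mu_j^3|y|)]$, split the rescaled integral according to whether $\mathcal U_{1,0}(y)\gtrless\mu_j^2|u_{\lambda_0}(\xi_j)|$, and evaluate the $\mu_j^3$-coefficient by explicit radial integration using $\mathcal U_{1,0}(y)=\alpha_6(1+|y|^2)^{-2}$. This yields exactly $\tfrac{11}{9}\alpha_6^{3/2}\omega_6|u_{\lambda_0}(\xi_j)|^{3/2}$, in accordance with the one-bubble computation in \cite[Sec.~3]{PV2021}. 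Assembling all pieces and absorbing the $(\boldsymbol\mu,\boldsymbol\xi)$-independent parts into $\mathfrak c_0(\ve)$ yields \eqref{re1}, with $\bar\mu^{7/2}+\bar\mu^3|\xi_j-\xi_{j,0}|^2$ coming from the concentration identity and $|\ve|^2\bar\mu^2+|\ve|^6$ from the $\ve$-Taylor remainders.
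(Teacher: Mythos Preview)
Your proposal is correct and follows essentially the same route as the paper: reduce to $J_\lambda(\mathcal V_{\boldsymbol\mu,\boldsymbol\xi})$ via a Taylor step, extract the $\mu_j^2$- and $\ve\mu_j^2$-coefficients through the concentration identity $\int\phi\,\mathcal U_{\mu_j,\xi_j}^2=d_1\mu_j^2\phi(\xi_j)+\text{l.o.t.}$, and obtain the $\tfrac{11}{9}d_2\mu_j^3$-term by splitting the inner ball at the matching radius where $\mathcal W_{\mu_j,\xi_j}\approx|z_{\lambda_0}|$, as in \cite{PV2021}. The paper's version differs only in bookkeeping --- it writes out an explicit $(A)$--$(I)$ decomposition and rewrites the bilinear $\langle z_{\lambda_0},\mathcal W_j\rangle$-cross terms via the equations for $u_{\lambda_0},v_{\lambda_0},w_{\lambda_0}$ rather than via your single identity $-\Delta z_{\lambda_0}-\lambda z_{\lambda_0}=f(z_{\lambda_0})+\mathcal O(|\ve|^3)$ --- but the computations and error control are the same.
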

\begin{proof}
The proof of $(i)$ is quite standard so we omit it(see, for instance, Proposition 2.2 of \cite{MPVe}).
Now we show $(ii)$.  In order to simplify once more the notations we let $$\mathcal W_j:=\mathcal W_{\mu_j, \xi_j},\quad \mathcal U_j:=\mathcal U_{\mu_j, \xi_j}.$$
First, by using \eqref{estimatevarphi} and by making standard computations we get that
$$J_\lambda(\mathcal V_{\boldsymbol\mu, \boldsymbol\xi}+\varphi_\lambda)=J_\lambda(\mathcal V_{\boldsymbol\mu, \boldsymbol\xi})+\mathcal O\left(|\ve|^2\|\varphi_\lambda\|+\|\varphi_\lambda\|^2\right).$$
Now, we need to estimate the main term of $J_\lambda(\mathcal V_{\boldsymbol\mu, \boldsymbol\xi})$, i.e.,
$$\begin{aligned}
&J_\lambda(\mathcal V_{\boldsymbol\mu, \boldsymbol\xi})=  \underbrace{\frac12\|z_{\lambda_0}\|^2-\frac{\lambda_0+\ve}{2}\|z_{\lambda_0}\|^2
-\|z_{\lambda_0}\|_{L^3}^3}_{:=\mathfrak c_1(\ve)}+\underbrace{\frac 12 \sum_{j=1}^k\| \mathcal W_j\|^2 -\frac13\sum_{j=1}^k  \| \W_j\|_{L^3}^3}_{(A)}\\
&-\underbrace{\frac{\lambda_0}{2}\sum_{j=1}^k \| \W_j\|_{L^2}^2-\left\langle f\left(\sum_{j=1}^k \beta_j \W_j\right),u_{\lambda_0}\right\rangle_{L^2}}_{(B)}+\underbrace{\frac 12 \sum_{i\neq j}\beta_i\beta_j \left(\left\langle \W_i, \W_j\right\rangle-(\lambda_0+\ve) \left\langle\W_i,\W_j\right\rangle_{L^2}\right)}_{(C)}\\
&-\underbrace{\frac{\ve}{2}\sum_{j=1}^k \| \W_j\|_{L^2}^2-\ve \left\langle f\left(\sum_{j=1}^k\beta_j \W_j\right), v_{\lambda_0}\right\rangle_{L^2}}_{(E)}-\underbrace{\ve^2 \left\langle f\left(\sum_{j=1}^k\beta_j \W_j\right), w_{\lambda_0}\right\rangle_{L^2}}_{(F)}-\underbrace{\ve^3\sum_{j=1}^k\beta_j \left\langle \W_j, w_{\lambda_0}\right\rangle_{L^2}}_{(G)}\\
&-\underbrace{\left[\frac13\left(\left\|\mathcal{V}_{\pmb\mu,\pmb\xi}\right\|_{L^3}^3-\left\|z_{\lambda_0}\right\|_{L^3}^3-\sum_{j=1}^k \left\|\W_j\right\|_{L^3}^3\right)-\left\langle f\left(\sum_{j=1}^k\beta_j \W_j\right),z_{\lambda_0}\right\rangle_{L^2}-\left\langle f(z_{\lambda_0}),\sum_{j=1}^k\beta_j \W_j\right\rangle_{L^2}\right]}_{(H)}\\
&-\underbrace{\left\langle f(z_{\lambda_0})-f(u_{\lambda_0})-f'(u_{\lambda_0})\ve v_{\lambda_0}-f'(u_{\lambda_0})\ve^2 w_{\lambda_0}-\ve^2{\rm sgn}(u_{\lambda_0})v_{\lambda_0}^2,\sum_{j=1}^k\beta_j \W_j\right\rangle_{L^2}}_{(I)}.
\end{aligned}$$
where $z_{\lambda_0}$ is given by \eqref{fix}.
By using \eqref{expW} and \eqref{muj} we have that $\phi_j:=\W_j-\uu_j=\mathcal O(\bar\mu^2).$ Then it is easy to show that
$(A):=\frac k 6 \int_{\mathbb R^6}\mathcal U^3\, dx +\mathcal O(\bar\mu^4)=\mathfrak c_2 +\mathcal O(\bar\mu^4).$
It follows from \eqref{talenti}, \eqref{expW}, \eqref{muj} and \eqref{xij} that
$$
\begin{aligned}
(B)&=-\frac{\lambda_0}{2}\sum_{j=1}^k \left\|\uu_j+\phi_j\right\|_{L^2}^2-\sum_{j=1}^k \left\langle f(\beta_j\W_j), u_{\lambda_0}\right\rangle_{L^2}-\left\langle f\left(\sum_{j=1}^k \beta_j\W_j\right)-\sum_{j=1}^k  f(\beta_j\W_j),u_{\lambda_0}\right\rangle_{L^2}\\
&=-\sum_{j=1}^k\underbrace{\left(\frac{\lambda_0}{2} \left\|\uu_j\right\|^2-\beta_j\left\langle\uu_j^2, u_{\lambda_0}\right\rangle_{L^2}\right)}_{(B_1)}-\underbrace{\left\langle f\left(\sum_{j=1}^k \beta_j\W_j\right)-\sum_{j=1}^k  f(\beta_j\W_j),u_{\lambda_0}\right\rangle_{L^2}}_{(B_2)}+\mathcal O(\bar\mu^4)
\end{aligned}
$$
and
$$\begin{aligned}(B_1)&=-\frac{\lambda_0}{2}\mu_j^2\int_{\mathbb R^6}\uu^2\, dx -\beta_j \mu_j^2\int_{\mathbb R^6}\uu^2 u_{\lambda_0}(\mu_j y+\xi_j)\, dy+\mathcal O(\bar\mu^4)\\
&=-\frac{\lambda_0}{2}d_1\mu_j^2 -\beta_ju_{\lambda_0}(\xi_j) \mu_j^2\|\uu\|_{L^2,\bbr^6}^2+\mathcal O(\bar\mu^4)+\mathcal O(\bar\mu^3|\xi_j-\xi_{j, 0}|^2)\\
&=-\frac{\lambda_0}{2}d_1\mu_j^2 -\beta_ju_{\lambda_0}(\xi_j) d_1\mu_j^2+\mathcal O(\bar\mu^4)+\mathcal O(\bar\mu^3|\xi_j-\xi_{j, 0}|^2)\\\end{aligned}$$
while
$$\begin{aligned}(B_2)=&-\sum_{h=1}^k \left\langle f\left(\beta_h \W_h +\sum_{j\neq h} \beta_j \W_j\right) -f(\beta_h \W_h)-\sum_{j\neq h}f(\beta_j \W_j), u_{\lambda_0}\right\rangle_{L^2,B_{\rho}(\xi_h)}\\
&-\left\langle f\left(\sum_{j=1}^k \beta_j\W_j\right)-\sum_{j=1}^k  f(\beta_j\W_j),u_{\lambda_0}\right\rangle_{L^2,\Omega\setminus \bigcup_{h=1}^k B_{\rho}(\xi_h)}\\
\lesssim& \sum_{h=1}^k\sum_{j\neq k} \left\langle\W_h,\W_j\right\rangle_{L^2,B_{\rho}(\xi_h)}+\sum_{h=1}^k\sum_{j\neq k} \left\| \W_j\right\|_{L^2,B_{\rho}(\xi_h)}^2+\sum_{j=1}^k\left\|\W_j\right\|_{L^2,\Omega\setminus \bigcup_{h=1}^k B_{\rho}(\xi_h)}^2\\
\lesssim &\,\bar\mu^{\frac 7 2}.
\end{aligned}$$
Hence
$$(B):=-\mu_j^2\left(\frac{\lambda_0}{2} +\beta_ju_{\lambda_0}(\xi_j)\right) d_1+\mathcal O(\bar\mu^{\frac 72})+\mathcal O(\bar\mu^3|\xi_j-\xi_{j, 0}|^2).$$
In a very similar way, it is possible to show that
$$(E):=-\ve\mu_j^2 \left(\frac{1}{2}+\beta_jv_{\lambda_0}(\xi_j)\right) d_1 +\mathcal O(|\ve|\bar\mu^{\frac 72})+\mathcal O(|\ve|\bar\mu^3|\xi_j-\xi_{j, 0}|^2).$$
Since $-\Delta\W_i=\uu_i^3$ for all $1\le i\le k$, then it follows directly that $(C)=\mathcal O(\bar\mu^{\frac 72})$. Moreover
$$(F) \lesssim |\ve|^2\sum_{j=1}^k \|\W_j\|_{L^2}^2 =\mathcal O(|\ve|^2\bar\mu^2)$$ and
$$(G):=\mathcal O\left(|\ve|^3\into \W_j\,dx\right)=\mathcal O\left(|\ve|^3 \into\frac{\mu_j^2}{|x-\xi_j|^4}\,dx\right)=\mathcal O\left(|\ve|^3\bar\mu^2\right).$$
Furthermore $$(I)\lesssim |\ve|^2\sum_{j=1}^k\into \W_j\,dx\lesssim |\ve|^2\bar\mu^2.$$
For what concerning $(H)$, we divide $\Omega$ into the small neighborhood of the point $\xi_h$, say $B_{\sqrt[4]{\mu_h}}(\xi_h)$, for all $1\leq h\leq k$ and the remaining region of $\Omega$, that is  $\Omega\setminus \bigcup_{h=1}^k B_{\sqrt[4]{\mu_h}}(\xi_h) $. For simplicity, we let $B_h:=B_{\sqrt[4]{\mu_h}}(\xi_h)$. Then
$$\begin{aligned}(H)&=\int_{\bigcup_{h=1}^k B_h}\left[\dots\dots\right]\, dx +\int_{\Omega\setminus \bigcup_{h=1}^k B_h}\left[\dots\dots\right]\, dx \\
&=\sum_{h=1}^k \int_{B_h}\left[\dots\dots\right]\, dx +\int_{\Omega\setminus \bigcup_{h=1}^k B_h}\left[\dots\dots\right]\, dx. \end{aligned}$$
In every $B_h$, we rewrite
$$\begin{aligned}
&F\left(z_{\lambda_0}+\sum_{j=1}^k\beta_j \W_j\right)-F(z_{\lambda_0})-\sum_{j=1}^k F\left(\W_j\right)-f\left(\sum_{j=1}^k\beta_j \W_j\right)z_{\lambda_0}-f\left(z_{\lambda_0}\right)\sum_{j=1}^k\beta_j \W_j\\
=& \left[F\left(z_{\lambda_0}+\beta_h \W_h +\sum_{j\neq h}\W_j\right)-F(z_{\lambda_0}+\beta_h \W_h)-f(z_{\lambda_0}+\beta_h \W_h )\sum_{j\neq h}\W_j\right]\\
&+\left[F(z_{\lambda_0}+\beta_h \W_h)-F(z_{\lambda_0})- F(\W_h)-f(\beta_h \W_h)z_{\lambda_0}-f(z_{\lambda_0})\beta_h\W_h\right]\\
&+ \left[f\left(\sum_{j=1}^k\W_j\right)-f(\beta_h\W_h)\right]z_{\lambda_0}
+\left[f(z_{\lambda_0}+\beta_h\W_h)-f(z_{\lambda_0})\right]\sum_{j\neq h}\beta_j\W_j-\sum_{j\neq h}F(\W_j).
\end{aligned}$$
Now, we use  Taylor expansion and suitable computations with \eqref{talenti}, \eqref{expW}, \eqref{muj} and \eqref{xij} to obtain
$$\begin{aligned}\int_{B_h}\left[\dots\dots\right]\, dx =&\int_{B_h}\left[F(z_{\lambda_0}+\beta_h \W_h)-F(z_{\lambda_0})- F(\W_h)-f(\beta_h \W_h)z_{\lambda_0}-f(z_{\lambda_0})\beta_h\W_h\right]\, dx\\
&+\mathcal O\left(\int_{B_h} \sum_{j\neq h}F(\W_j)\, dx\right)+\mathcal O\left(\int_{B_h} \W_h\sum_{j\neq h}\W_j^2\, dx\right)\\
&+\mathcal O\left(\int_{B_h} \W_h\sum_{j\neq h}\W_j\, dx\right)+\mathcal O\left(\int_{B_h} \W_h^2\sum_{j\neq h}\W_j\, dx\right)\\
=&\int_{B_h}\left[F(z_{\lambda_0}+\beta_h \W_h)-F(z_{\lambda_0})- F(\W_h)-f(\beta_h \W_h)z_{\lambda_0}-f(z_{\lambda_0})\beta_h\W_h\right]\, dx\\
&+\mathcal O\left(\bar\mu^{\frac 7 2}\right).
\end{aligned}$$
In the remaining part $\Omega\setminus \bigcup_{h=1}^k B_h $, we rewrite
$$\begin{aligned}
&F\left(z_{\lambda_0}+\sum_{j=1}^k\beta_j \W_j\right)-F(z_{\lambda_0})-\sum_{j=1}^k F\left(\W_j\right)-f\left(\sum_{j=1}^k\beta_j \W_j\right)z_{\lambda_0}-f\left(z_{\lambda_0}\right)\sum_{j=1}^k\beta_j \W_j\\
=&\left[F\left(z_{\lambda_0}+\sum_{j=1}^k\beta_j \W_j\right)-F(z_{\lambda_0})-f(z_{\lambda_0})\sum_{j=1}^k\beta_j \W_j\right]+\sum_{j=1}^k\beta_j F\left(\W_j\right)+f\left(\sum_{j=1}^k\beta_j \W_j\right)z_{\lambda_0}.
\end{aligned}$$
By Taylor expansion and direct computations with \eqref{talenti}, \eqref{expW}, \eqref{muj} and \eqref{xij} again, we get that
$$\begin{aligned}\int_{\Omega\setminus \bigcup_{h=1}^k B_h}\left[\dots\dots\right]\, dx&=\mathcal O\left(\sum_{j=1}^k\int_{\Omega\setminus \bigcup_{h=1}^k B_h}\W_j^3\,dx\right)+\mathcal O\left(\sum_{j=1}^k\int_{\Omega\setminus \bigcup_{h=1}^k B_h}\W_j^2\,dx\right)\\
&=\mathcal O\left(\bar\mu^{\frac 7 2}\right).\end{aligned}$$
Hence
$$(H)=\sum_{h=1}^{k}\underbrace{\int_{B_h}\left[F(z_{\lambda_0}+\beta_h \W_h)-F(z_{\lambda_0})- F(\W_h)-f(\beta_h \W_h)z_{\lambda_0}-f(z_{\lambda_0})\beta_h\W_h\right]\, dx}_{(H_h)}+\mathcal O\left(\bar\mu^{\frac 7 2}\right).$$
It remains to evaluate $(H_h)$ for all $1\leq h \leq k$. First we notice that
$$\left\{\begin{aligned} &\beta_h=-1\,\,&\mbox{and}\,\, z_{\lambda_0}\leq 0 \quad &\Longrightarrow \quad (H_h)=0,\\
&\beta_h=1\,\,&\mbox{and}\,\, z_{\lambda_0}\geq 0 \quad &\Longrightarrow \quad (H_h)=0.\end{aligned}\right.$$
Then, if $\beta_h=-1$ then \begin{equation}\label{-1}\begin{aligned}(H_h)&=\frac 13 \int_{\left\{z_{\lambda_0}(x)\geq \W_h\right\}\cap B_h}\left(-2\W_h^3+6z_{\lambda_0}\W_h^2\right)\, dx\\
&\quad+\frac 13 \int_{\left\{0\leq z_{\lambda_0}(x)<\W_h\right\}\cap B_h}\left(-2z_{\lambda_0}^3+6z_{\lambda_0}^2\W_h\right)\,dx\end{aligned}\end{equation}
while, if $\beta_h=1$ then \begin{equation*}\label{1}\begin{aligned}(H_h)&=\frac 13 \int_{\left\{z_{\lambda_0}(x)\leq -\W_h\right\}\cap B_h}\left(-2\W_h^3-6z_{\lambda_0}\W_h^2\right)\, dx \\&\quad+\frac 13 \int_{\left\{-\W_h< z_{\lambda_0}(x)\leq 0\right\}\cap B_h}\left(2z_{\lambda_0}^3+6z_{\lambda_0}^2\W_h\right)\,dx.\end{aligned}\end{equation*}
We remark that easily follows that
$$\int_{\left\{z_{\lambda_0}(x)\leq -\W_h\right\}\cap B_h}\left(-2\W_h^3-6z_{\lambda_0}\W_h^2\right)\, dx= \int_{\left\{z_{\lambda_0}(x)\geq \W_h\right\}\cap B_h}\left(-2\W_h^3+6z_{\lambda_0}\W_h^2\right)\, dx$$
while
$$\int_{\left\{-\W_h< z_{\lambda_0}(x)\leq 0\right\}\cap B_h}\left(2z_{\lambda_0}^3+6z_{\lambda_0}^2\W_h\right)\, dx=-\int_{\left\{0\leq z_{\lambda_0}(x)<\W_h\right\}\cap B_h}\left(-2z_{\lambda_0}^3+6z_{\lambda_0}^2\W_h\right)\, dx$$ and hence we can only evaluate
\eqref{-1}.\\
As done in \cite{PV2021}, it is possible to show that for $\ve$ sufficiently small and $\mu_j, \xi_j$ that satisfies \eqref{muj} and \eqref{xij} respectively we have that
\be\label{inclusion1}
B(\xi_h, R^1_h\sqrt\mu_h)\subset \left\{x\in\Omega\,:\, 0<z_{\lambda_0}(x)< \W_h\right\}\cap B_h \subset B(\xi_h, R^2_h\sqrt\mu_h)\ee
where \be\label{inclusion3}R^1_h, R^2_h=R_h^0+\mathcal O(|\ve|)\quad \mbox{with}\,\, R_h^0:=\left(\frac{\alpha_6}{|u_0(\xi_{h, 0})|}\right)^{\frac 14}.\ee Moreover
 \be\label{inclusion2}\begin{aligned}
B^c(\xi_h, R^2_h\sqrt\mu_h)\cap B_h &\subset \left\{x\in\Omega\,:\, z_{\lambda_0}(x)\geq \W_h\right\}\cap B_h\subset B^c(\xi_h, R^1_h\sqrt\mu_h).\end{aligned}\ee
It follows from \eqref{inclusion1} and \eqref{inclusion2} that
$$\begin{aligned}\int_{B^c(\xi_h, R^2_h\sqrt\mu_h)\cap B_h}\left(-2\W_h^3+6z_{\lambda_0}\W_h^2\right)\, dx &\leq \int_{\left\{x\in\Omega\,:\, z_{\lambda_0}(x)\geq \W_h\right\}\cap B_h}\left(-2\W_h^3+6z_{\lambda_0}\W_h^2\right)\, dx\\& \leq \int_{B^c(\xi_h, R^1_h\sqrt\mu_h)}\left(-2\W_h^3+6z_{\lambda_0}\W_h^2\right)\, dx. \end{aligned}$$
Now, as done in \cite{PV2021}, let $R_h$ denote either $R_h^1$ or $R_h^2$. Then, by \eqref{inclusion3} we get
$$\begin{aligned}&\frac 13\int_{B^c(\xi_h, R^2_h\sqrt\mu_h)\cap B_h}\left(-2\W_h^3+6z_{\lambda_0}\W_h^2\right)\, dx \\
=&-\frac 2 3\int_{R_h\sqrt{\mu_h}\leq |x-\xi_h|\leq \mu_h^{\frac 1 4}}\uu_h^3\, dx+2\int_{R_h\sqrt{\mu_h}\leq |x-\xi_h|\leq \mu_h^{\frac 1 4}}u_{\lambda_0}(x)\uu_h^2\, dx+\mathcal O\left(|\ve|\mu_h^3\right)\\
=&-\frac 23\alpha_6^3 \int_{\frac{R_h}{\sqrt{\mu_h}}\leq |y|\leq \mu_h^{-\frac 3 4}}\frac{1}{(1+|y|^2)^6}\, dy+2u_{\lambda_0}(\xi_{h, 0})\alpha_6^2\mu_h^2 \int_{\frac{R_h}{\sqrt{\mu_h}}\leq |y|\leq \mu_h^{-\frac 3 4}}\frac{1}{(1+|y|^2)^4}\, dy+\mathcal O(|\ve| \mu_h^3)\\
=&\,\frac 8 9\mu_h^3 \omega_6 \alpha_6^{\frac 32} (u_{\lambda_0}(\xi_{h, 0}))^{\frac 32}+\mathcal O(|\ve| \mu_h^3),\end{aligned}$$
and by comparison
$$\frac 13 \int_{\left\{x\in\Omega\,:\, z_{\lambda_0}(x)\geq \W_h\right\}\cap B_h}\left(-2\W_h^3+6z_{\lambda_0}\W_h^2\right)\, dx=\frac 8 9\mu_h^3 \omega_6 \alpha_6^{\frac 32} (u_{\lambda_0}(\xi_{h, 0}))^{\frac 32}+\mathcal O(|\ve| \mu_h^3).$$
In a similar way, as in \cite{PV2021} it follows that $$\frac 13 \int_{\left\{0\leq z_{\lambda_0}(x)<\W_h\right\}\cap B_h}\left(-2z_{\lambda_0}^3+6z_{\lambda_0}^2\W_h\right)\,dx=\frac 13\mu_h^3 \omega_6 \alpha_6^{\frac 32} (u_{\lambda_0}(\xi_{h, 0}))^{\frac 32}+\mathcal O(|\ve| \mu_h^3).$$ The thesis follows.\end{proof}

\section{Solving The reduced problem: proof of the Theorems \ref{thm1} and \ref{thm2}}
It follows from \eqref{re1} and \eqref{re2} in Proposition~\ref{reduced} that the main term of the reduced energy may vanish. Therefore, to complete our constructions, we have to demonstrate the impact of topology of the domain $\Omega$ on the Brezis-Nirenberg problem \eqref{pb}, specifically by proving $u_{\lambda_0}$ is essentially non-degenerate. We begin by proving Theorem \ref{thm1} through a PDE argument.
\begin{proof}[Proof of Theorem \ref{thm1}]
Clearly, if $u_{\lambda}$ is essentially non-degenerate then by the definition, it is easy to see that $\Omega_{u_{\lambda},\frac{\lambda}{2}}\cup\Omega_{u_{\lambda},-\frac{\lambda}{2}}\not=\emptyset$.  Thus, to complete the proof, it remains to show $u_{\lambda}$ is essentially non-degenerate if $\Omega_{u_{\lambda},\frac{\lambda}{2}}\cup\Omega_{u_{\lambda},-\frac{\lambda}{2}}\not=\emptyset$.

Let
\begin{eqnarray*}
w_{\eta}(x)=\frac{(x-\eta)\cdot\nabla u_{\lambda}(x)}{2}+u_{\lambda}(x)-\lambda v_{\lambda}(x),
\end{eqnarray*}
where $\eta$ is an arbitrary fixed point in $\mathbb{R}^6$.  Since $u_{\lambda}$ and $v_{\lambda}$ are solutions of \eqref{pb} and \eqref{eqnPreWu0026}, respectively, we know that $w_{\eta}$ satisfies
\begin{eqnarray}\label{eqnPreWu0027}
\left\{
\aligned
&-\Delta w_{\eta}=\left(\lambda+2|u_{\lambda}|\right)w_{\eta},\quad\text{in }\Omega,\\
&w_{\eta}(x)=\frac{(x-\eta)\cdot\nabla u_{\lambda}(x)}{2}\quad\text{on }\partial\Omega.
\endaligned
\right.
\end{eqnarray}
Clearly, $\forall\xi\in\Omega_{u_{\lambda},\pm\frac{\lambda}{2}}$, we have $w_{\eta}(\xi)=u_{\lambda}(\xi)-\lambda v_{\lambda}(\xi)$ and $\quad\nabla w_{\eta}(\xi)=\frac{1}{2}\nabla^2u_{\lambda}(\xi)(\xi-\eta)-\lambda\nabla v_{\lambda}(\xi)$
for all $\eta\in\mathbb{R}^6$.  Suppose the contrary that $u_{\lambda}$ is not essentially non-degenerate.  Then
\begin{eqnarray*}
\Omega_{u_{\lambda},\frac{\lambda}{2}}\subset\left(\Omega_{v_{\lambda},\frac{1}{2}}\cap\Omega_{u_{\lambda},v_{\lambda},+}\right)\quad\text{and}\quad\Omega_{u_{\lambda},-\frac{\lambda}{2}}\subset\left(\Omega_{v_{\lambda},-\frac{1}{2}}\cap\Omega_{u_{\lambda},v_{\lambda},-}\right).
\end{eqnarray*}
It follows that $\nabla v_{\lambda}(\xi)\in\text{Ker}^{\perp}\left(\nabla^2u_{\lambda}(\xi)\right)$ and $v_{\lambda}(\xi)=\pm\frac{1}{2}$ for all $\xi\in\Omega_{u_{\lambda},\pm\frac{\lambda}{2}}$, which implies that $w_{\eta}(\xi)=0$ for all $\eta\in\mathbb{R}^6$ and all $\xi\in\Omega_{u_{\lambda},\pm\frac{\lambda}{2}}$.  Since $\nabla v_{\lambda}(\xi)\in\text{Ker}^{\perp}\left(\nabla^2u_{\lambda}(\xi)\right)$, we have
\begin{eqnarray*}
\nabla w_{\eta}(\xi)=\sum_{i=1}^{l}\left(\tau_i\frac{(\xi-\eta)\cdot e_i}{2}-\lambda\nabla v_{\lambda}(\xi)\cdot e_i\right)e_i,
\end{eqnarray*}
where $\tau_1<\tau_2<\cdots<\tau_l<0$ are the negative eigenvalues of the matrix $\nabla^2u_{\lambda}(\xi)$.  Note that $\eta$ is an arbitrary fixed point in $\mathbb{R}^6$, we can choose $\eta_0\in \mathbb{R}^6$ such that $\nabla w_{\eta_0}(\xi)=0$.  However, since $w_{\eta_0}$ solves \eqref{eqnPreWu0027}, by the classical regularity theory of elliptic equations, $w_{\eta_0}$ is smooth, which implies that the level set $\Omega_{w_{\eta_0},0}=\left\{x\in\Omega\mid w_{\eta_0}(x)=0\right\}$
is smooth.  Thus, by the strong maximum principle, $\left|\nabla w_{\eta_0}(y)\right|=\left|\frac{\partial w_{\eta_0}}{\partial {\bf n}}(y)\right|\not=0$
for all $y\in\Omega_{w_{\eta_0},0}$, where ${\bf n}$ is the unit normal vector of $\Omega_{w_{\eta_0},0}$.  It is a contradiction since $\xi\in\Omega_{w_{\eta_0},0}$ with  $\nabla w_{\eta_0}(\xi)=0$.  Thus, $u_{\lambda}$ is essentially non-degenerate.
\end{proof}

We are now in position to show the existence result Theorem \ref{thm2}.
\begin{proof}[Proof of Theorem \ref{thm2}]
Since $\Omega_{u_{\lambda_0},\frac{\lambda_0}{2}}\cup\Omega_{u_{\lambda_0},-\frac{\lambda_0}{2}}\not=\emptyset$, by Theorem~\ref{thm1}, $u_{\lambda_0}\not=0$ is an essentially non-degenerate solution of Brezis-Nirenberg problem~\eqref{pb}.  Thus, we can choose $\{\xi_{j,0}\}_{1\leq j\leq k}\subset\Omega_{u_{\lambda_0},*}$
and take
\begin{eqnarray*}
\beta_j=\left\{\aligned
&-1,\quad \xi_{j,0}\in\Omega_{u_{\lambda_0},\frac{\lambda_0}{2}}\setminus\left(\Omega_{v_{\lambda_0},\frac{1}{2}}\cap\Omega_{u_{\lambda_0},v_{\lambda_0},+}\right),\\
&1,\quad \xi_{j,0}\in\Omega_{u_{\lambda_0},\frac{\lambda_0}{2}}\setminus\left(\Omega_{v_{\lambda_0},-\frac{1}{2}}\cap\Omega_{u_{\lambda_0},v_{\lambda_0},-}\right).
\endaligned
\right.
\end{eqnarray*}
For $\xi_{j,0}\in\Omega_{u_{\lambda_0},\pm\frac{\lambda_0}{2}}\setminus\left(\Omega_{v_{\lambda_0},\pm\frac{1}{2}}\cap\Omega_{u_{\lambda_0},v_{\lambda_0},\pm}\right)$, either
\begin{enumerate}
\item[$(1)$]\quad $\xi_{j,0}\in\Omega_{u_{\lambda_0},\pm\frac{\lambda_0}{2}}\setminus\Omega_{v_{\lambda_0},\pm\frac{1}{2}}$ or
\item[$(2)$]\quad $\xi_{j,0}\in\left(\Omega_{u_{\lambda_0},\pm\frac{\lambda_0}{2}}\setminus\Omega_{u_{\lambda_0},v_{\lambda_0},\pm}\right)\cap\Omega_{v_{\lambda_0},\pm\frac{1}{2}}$.
\end{enumerate}
In the case~$(1)$, we assume that $\xi_j=\xi_{j,0}$, which leads to
\begin{eqnarray*}
\mathcal{E}_{\lambda,j}(\xi_j,\mu_j)=\ve d_1\left(\frac{1}{2}+\beta_jv_{\lambda_0}(\xi_j)\right)\mu_j^2+\frac{11}{9}d_2\mu_j^3+\mathcal{O}(|\ve|^2\bar{\mu}^2+|\ve|^6+\bar{\mu}^4\left|\ln\bar{\mu}\right|^{\frac{4}{3}}).
\end{eqnarray*}
Since $\frac{1}{2}+\beta_jv_{\lambda_0}(\xi_j)\not=0$, we can choose $\frac{\ve}{|\ve|}=-\text{sgn}\left(\frac{1}{2}+\beta_jv_{\lambda_0}(\xi_j)\right)$, $\bar{\mu}=|\ve|$ and $\tau_{j,0}$ to be the unique global minimum point of the function
\begin{eqnarray*}
P_j(\tau)=-d_1\left|\frac{1}{2}+\beta_jv_{\lambda_0}(\xi_j)\right|\tau^2+\frac{11}{9} d_2\tau^3
\end{eqnarray*}
as in \cite{PV2021}.  Thus, $\mathcal{E}_{\lambda,j}(\xi_j,\mu_j)$ has a unique critical point as $\ve\to0$.  In the case~$(2)$, we must have $\Omega_{u_{\lambda_0},\pm\frac{\lambda_0}{2}}\setminus\Omega_{u_{\lambda_0},v_{\lambda_0},\pm}\not=\emptyset$.  It follows that $l_{j}<6$, where $\{e_{j,1},e_{j,2},\cdots e_{j,l_{j}}\}\in \mathbb{S}^{5}$
is the basis of $\text{Ker}^{\perp}\left(\nabla^2u_{\lambda}(\xi_{j,0})\right)$.  We can assume that $\xi_j=\xi_{j,0}+\rho e_{j,l_{j}+1}$.  Then by the Taylor expansion,
\begin{eqnarray*}
\mathcal{E}_{\lambda,j}(\xi_j,\mu_j)&=&\ve d_1\left(\nabla v_{\lambda_0}(\xi_{j,0})\cdot e_{j,l_{j}+1}\right)\rho\mu_j^2+\frac{11}{9} d_2\mu_j^3\\
&&+\mathcal{O}(\rho^3\bar{\mu}^2+|\ve|^2\bar{\mu}^2+|\ve|^6+\bar{\mu}^4\left|\ln\bar{\mu}\right|^{\frac{4}{3}}+\rho^2\bar{\mu}^3).
\end{eqnarray*}
We can choose $\frac{\ve}{|\ve|}=-\text{sgn}\left(\nabla v_{\lambda_0}(\xi_{j,0})\cdot e_{j,l_{j}+1}\right)$, $\rho=|\ve|^{s}$ with $\frac{1}{2}<s<1$, $\bar{\mu}=|\ve|^{1+s}$ and $\tau_{j,0}$ to be the unique global minimum point of the function
\begin{eqnarray*}
Q_j(\tau)=-d_1\left|\nabla v_{\lambda_0}(\xi_{j,0})\cdot e_{j,l_{j}+1}\right|\tau^2+\frac{11}{9} d_2\tau^3.
\end{eqnarray*}
Thus, $\mathcal{E}_{\lambda,j}(\xi_j,\mu_j)$ also has a unique critical point as $\ve\to0$.  It follows from standard arguments that $u_{\lambda}$ given by \eqref{eqnPreWu0001} is a solution of Brezis-Nirenberg problem~\eqref{pb} as $\lambda\to\lambda_0$, where $\Omega$ is a non-degenerate domain and $\lambda_0$ is a special value such that the Brezis-Nirenberg problem~\eqref{pb} in the non-degenerate domain $\Omega$ has a solution $u_{\lambda_0}$ such that $\Omega_{u_{\lambda_0},\frac{\lambda_0}{2}}\cup\Omega_{u_{\lambda_0},-\frac{\lambda_0}{2}}\not=\emptyset$.
\end{proof}

\end{document}